\newcommand\numberthis{\addtocounter{equation}{1}\tag{\theequation}}
\numberwithin{equation}{section}
\theoremstyle{plain}
\newtheorem{thm}{Theorem}[section]
\newtheorem*{thm*}{Theorem}
\newtheorem{lem}[thm]{Lemma}
\newtheorem*{cor*}{Corollary}
\theoremstyle{definition}
\newtheorem{defn}{Definition}[section]
\theoremstyle{remark}
\newtheorem{rem}{Remark}[section]
\newcommand{\BigOq}[1]{\ensuremath{\operatorname{O}_q\left(#1\right)}}
\newcommand{\BigO}[1]{\ensuremath{\operatorname{O}\left(#1\right)}}
\def\pmod#1{\allowbreak\mkern10mu({\operator@font mod}\,\,#1)} % Shortens \pmod space
\begin{document}
\subjclass[2010]{11M06, 11N37, 11R42}
\keywords{Ramanujan sums, number fields, Perron formulas, Dedekind zeta function}

\title{On the distribution of Ramanujan Sums over number fields}

\author{Sneha Chaubey}
\email{sneha@iiitd.ac.in}

\author{Shivani Goel}
\email{shivanig@iiitd.ac.in}

\address[]{Department of Mathematics, IIIT Delhi, New Delhi 110020}
%{Department of Mathematics, IIIT Delhi, New Delhi 110020}
\begin{abstract}
   For a number field $\mathbb{K}$, and integral ideals $\mathcal{I}$ and $\mathcal{J}$ in its number ring $\mathcal{O}_{\mathbb{K}}$, Nowak studied the asymptotic behaviour of the average of Ramanujan sums $C_{\mathcal{J}}({\mathcal{I}})$ over both ideals $\mathcal{I}$ and $\mathcal{J}$. In this article, we extend this investigation by establishing asymptotic formulas for the second moment of averages of Ramanujan sums over quadratic and cubic number fields, thereby generalizing previous works of Chen, Kumchev, Robles, and Roy on moments of averages of Ramanujan sums over rationals. Additionally, using a special property of certain integral domains, we obtain second moment results for Ramanujan sums over some other number fields.
\end{abstract}
\maketitle
\section{Introduction and Main Results}

Ramanujan in 1918 \cite{ramanujan1918certain}, while studying the trigonometric series  representations  of normalized  arithmetic functions, introduced a function
\[c_n(m):=\sum_{\substack{1\le j\le n\\(j,n)=1}}e\left(\frac{ mj}{n}\right)=\sum_{\substack{d|n\\d|m}}d\mu\left(\frac{n}{d}\right)\numberthis\label{rsum}\] now known as the Ramanujan sum, where $m$ and $n$ are positive integers, $e(x)=e^{2\pi ix}$, and $\mu(n)$ is the Mobius function. 
%Ramanujan sum is used in almost every field of mathematics. 
Understanding these sums and their distribution is an important topic of study in number
theory, with profound connections to problems in arithmetic such as in the proof of Vinogradov's theorem \cite[Chapter 8]{nathanson1996additive}, Waring type formulas \cite{konvalina1996generalization}, distribution of rational numbers in short intervals \cite{jutila2007distribution}, equipartition modulo odd integers \cite{balandraud2007application}, large sieve inequality \cite{ramare2007eigenvalues}, as well as other areas of mathematics.
%see for example, %\cite{fowler2014ramanujan,diaconis2008supercharacters,grytczuk1992ramanujan}, combinatorics \cite{rao1981ramanujan}, graph theory \cite{droll2010classification}, and cyclotomic polynomials \cite{erdos1974bounds,toth2010some}. 
%Using elementary and analytic, where they use Van der Corput's method to obtain estimates on the mean order of certain subsums involving the saw-tooth function, and use Perron's formula and double complex integrals to estimate the second moments. 

Ramanujan sums have been generalized by many mathematicians in several contexts. Some examples include Cohen-Ramanujan sums \cite{sugunamma1960eckford},
%As an example, using Cohen's generalized gcd \cite{sugunamma1960eckford} in definition \eqref{rsum}, one obtains the Cohen-Ramanujan sums. Further generalizations include but 
%not limited to 
Anderson-Apostol sums \cite{anderson1953evaluation}, polynomial Ramanujan sums introduced by Carlitz \cite{carlitz1947singular} and later generalized by Cohen \cite{cohen1949extension}, and Ramanujan sums in the more general context of arithmetic subgroups (see \cite{grytczuk1992ramanujan,knopfmacher2015abstract}).

The main purpose of this note is to study the distribution of Ramanujan sums defined over number fields by examining moments of its mean values. Let $\mathbb{K}$ be a number field and $\mathcal{J}$ and $\mathcal{I}$ be non-zero integral ideal in its number ring $\mathcal{O}_{\mathbb{K}}$, then Ramanujan sums over $\mathbb{K}$ are defined as
        \[C_{\mathcal{J}}({\mathcal{I}}):=\sum_{\substack{\mathcal{I}_1|\mathcal{J}\\\mathcal{I}_1|\mathcal{I}}}\mathcal{N}(\mathcal{I}_1)\mu(\frac{\mathcal{J}}{\mathcal{I}_1}).\numberthis\label{one}\]
         Here, $\mathcal{N}(\mathcal{I}_1)$ is the norm of $\mathcal{I}_1$ and $\mu(\mathcal{I})$ is the generalization of classical Mobius function such that \[\mu(\mathcal{I})=\left\{\begin{array}{ll}
       (-1)^r  & \mbox{if } \mathcal{I}\ \text{is a product of r distinct prime ideals},  \\
       0  &  \mbox{if } \text{there exists a prime ideal}\ \mathcal{P} \ \text{of}\ \mathcal{O}_{\mathbb{K}}\  \text{such that} \ \mathcal{P}^2|\mathcal{I}.
    \end{array}\right.\]
    Note that for $\mathbb{K}=\mathbb{Q}$, it is the usual Ramanujan sum $c_n(m)$ in \eqref{rsum}.
    The question on the average order over both variables $n$ and $m$ of $c_n(m)$ was first considered by Chan and Kumchev \cite{MR2869206} motivated by applications to problems on Diophantine approximations of reals by sums of rational numbers. In \cite{MR2869206}, using both elementary and analytic techniques, they  
find asymptotic formulas for
\[\sum_{m\le y}\left(\sum_{n\le x}c_n(m)\right)^k\] for $k=1,2$. 
Robles and Roy \cite{MR3600410} study first, second, and higher moments of averages of Cohen-Ramanujan sums. Although their result for higher moments  ($k\ge 3$) (Proposition 1.1) is incorrect, and the problem of computing asymptotic formulas for higher moments even for the usual Ramanujan sums \eqref{rsum} remains open. 
%An estimate for the order of higher moments for mean values of Ramanujan sums over rationals is summarized in (\textcolor{red}{Work in prep}).
With regard to the number field analogue of Ramanujan sums, Nowak \cite{Nowak12} showed that if $\mathbb{K}$ is a fixed quadratic number field, and $y>x^{\delta}$ where $\delta>\frac{1973}{820}=2.40609\cdots$, then 
\[\sum_{0<\mathcal{N}(\mathcal{I})\le y}\sum_{0<\mathcal{N}(\mathcal{J})\le x}C_{\mathcal{J}}({\mathcal{I}})\sim \rho_{\mathbb{K}} y,\numberthis\label{nowak}\]
More precisely, for $y>x$ and arbitrary $\epsilon>0$,
 \begin{align*}
            \sum_{0<\mathcal{N}(\mathcal{I})\le y}\sum_{0<\mathcal{N}(\mathcal{J})\le x}C_{\mathcal{J}}({\mathcal{I}})=&\rho_{\mathbb{K}} y+\BigO{x^{\frac{1973}{1358}}y^{\frac{269}{679}+\epsilon}}+\BigO{x^{\frac{1234823}{737394}}y^{\frac{205}{679}+\epsilon}}+\BigO{x^{\frac{23917}{21728}}y^{\frac{8675}{16296}+\epsilon}}\\&+ \BigO{x^2y^{\epsilon}},\end{align*}
where 
\[\rho_{\mathbb{K}}=\lim_{t\rightarrow\infty}\frac{1}{t}\#\{\text{integral ideals }  \mathcal{I} \text{ in } O_{\mathbb{K}}: 0< N(\mathcal{I})\le t\}
%=\text{Residue of}\ \zeta_{\mathbb{K}}(s)\ \text{at}\ s=1.
\numberthis\label{residue}\]%If $x=o(y)$ then \eqref{nowak} shows that $C_{\mathcal{J}}({\mathcal{I}})$ on the average over x and y is $o(1)$ but it is not true if we consider average of $C_{\mathcal{J}}({\mathcal{I}})$ defined in the theorems \ref{theorem2}, \ref{theorem4} and \ref{theorem5} stated below. One can get some interesting application of moment of Ramanujan sums over number as the usual Ramanujan sums. \textcolor{blue}{Write the motivation and the new information one gets from computing 2nd moment}
%;
In this note, we estimate the sum
\[\sum_{0<\mathcal{N}(\mathcal{J})\le x}C_{\mathcal{J}}({\mathcal{I}})\] in average over ideals $\mathcal{I}$ such that $\mathcal{N}(\mathcal{I})\in\{1, \cdots, y\}$ via the second moment. This generalizes the results in \cite{MR3600410} for second moments of mean value of Cohen-Ramanujan sums. For a quadratic number field $\mathbb{K}$, we derive the following asymptotic formula.
\iffalse\begin{thm}\label{theorem1}
Let $\mathbb{K}$ be a quadratic number field, then for a large number $k\ge4$, and  $x^{7/2}<y<q x^{\frac{2k+3}{3}}$
\[ \sum_{0<\mathcal{N}(\mathcal{I})\le y}\sum_{0<\mathcal{N}(\mathcal{J})\le x}C_{\mathcal{J}}({\mathcal{I}})=\rho y+\BigO{xy^{5/7}\log^6y\log x}.\]
\end{thm}
We also consider the analog of \eqref{kr2moment} and obtain the following result.
\begin{thm}\label{theorem3}
Let $\mathbb{K}$ be a cubic number field, then for a large number k, and  $x^{1+k/2}<y<q x^{1+5k/6}$
\[ \sum_{0<\mathcal{N}(\mathcal{I})\le y}\sum_{0<\mathcal{N}(\mathcal{J})\le x}C_{\mathcal{J}}({\mathcal{I}})=\rho_{\mathbb{K}} y+\BigO{xy^{13/16}\log^8y\log^2x}.\]
\end{thm}
\fi
\begin{thm}\label{theorem2}
Let $\mathbb{K}$ be a quadratic number field, then for any $\epsilon>0$, and $x\le y <x^2$ 
   \begin{align*}
        \sum_{0<\mathcal{N}(\mathcal{I})\le y} \left(\sum_{0<\mathcal{N}(\mathcal{J})\le x}C_{\mathcal{J}}({\mathcal{I}})\right)^2&= \frac{\rho_{\mathbb{K}}^2}{2\zeta_{\mathbb{K}}(2)}x^2y-\frac{\rho_{\mathbb{K}}^2\zeta_{\mathbb{K}}(0)}{4\zeta_{\mathbb{K}}(2)^2 }x^4+\BigO{yx^{2-\epsilon}\log^7 x+x^2y^{5/6}\log^{24}x},
   \end{align*}
   and for $x^2\le y <x^3$
    \begin{align*}
        \sum_{0<\mathcal{N}(\mathcal{I})\le y} \left(\sum_{0<\mathcal{N}(\mathcal{J})\le x}C_{\mathcal{J}}({\mathcal{I}})\right)^2&= \frac{\rho_{\mathbb{K}}^2}{2\zeta_{\mathbb{K}}(2)}x^2y+\BigO{yx^{2-\epsilon}\log^7 x+x^2y^{5/6}\log^{24}x}.
   \end{align*}
\end{thm}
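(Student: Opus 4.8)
The plan is to expand the square and reduce the problem to a divisor-type sum over ideals, then apply a Perron-type contour integration using the Dedekind zeta function. I will begin by writing the inner sum $\sum_{0<\mathcal{N}(\mathcal{J})\le x}C_{\mathcal{J}}(\mathcal{I})$ in a more tractable form. Using the definition \eqref{one} and interchanging the order of summation over the common divisor ideal $\mathcal{I}_1 \mid \mathcal{I}$, one obtains an expression of the shape $\sum_{\mathcal{I}_1 \mid \mathcal{I}} \mathcal{N}(\mathcal{I}_1) \sum_{\mathcal{N}(\mathcal{J}) \le x,\ \mathcal{I}_1 \mid \mathcal{J}} \mu(\mathcal{J}/\mathcal{I}_1)$. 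The inner Möbius sum over ideals can be estimated via the analytic properties of $1/\zeta_{\mathbb{K}}(s)$, producing a main term proportional to $x$ (governed by $\rho_{\mathbb{K}}$) together with an error. This gives an approximation $S(\mathcal{I}) \approx \frac{\rho_{\mathbb{K}}}{\zeta_{\mathbb{K}}(2)} x \cdot g(\mathcal{I})$ for a suitable multiplicative function $g$ supported on the ideal structure, up to controlled error.

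Next I would square this and sum over $\mathcal{I}$ with $\mathcal{N}(\mathcal{I}) \le y$. The dominant contribution comes from the diagonal, which after expansion becomes a Dirichlet series in $s$ whose coefficients are $\sum_{\mathcal{N}(\mathcal{I}) = n} g(\mathcal{I})^2$ (plus cross terms). The key is to identify the generating Dirichlet series $\sum_{\mathcal{I}} g(\mathcal{I})^2 \mathcal{N}(\mathcal{I})^{-s}$ and factor it as $\zeta_{\mathbb{K}}(s) \cdot H(s)$ for some series $H(s)$ that converges in a half-plane strictly to the left of $\mathrm{Re}(s) = 1$. I expect $H(s)$ to encode the $1/\zeta_{\mathbb{K}}(s+1)$-type corrections responsible for the $\zeta_{\mathbb{K}}(2)$ and $\zeta_{\mathbb{K}}(0)$ factors in the stated main terms. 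Applying Perron's formula and shifting the contour past the simple pole of $\zeta_{\mathbb{K}}(s)$ at $s=1$ yields the leading term $\frac{\rho_{\mathbb{K}}^2}{2\zeta_{\mathbb{K}}(2)} x^2 y$; a secondary pole (or the functional-equation-driven term at $s=0$) should produce the $x^4$ correction, which is why it appears only in the range $x \le y < x^2$ where $x^4 \le x^2 y$ is not dominated, and is absorbed into the error for $y \ge x^2$.

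The error terms require the most care. The contribution $\BigO{yx^{2-\epsilon}\log^7 x}$ should arise from the off-diagonal cross terms in the expansion of the square, where the two inner sums run over distinct divisor structures; bounding these will use the classical estimate for the number of ideals of bounded norm in $\mathcal{O}_{\mathbb{K}}$ together with divisor-bound arguments, and the $\log^7 x$ power reflects the accumulated logarithmic losses from iterated divisor sums over a quadratic field. The term $\BigO{x^2 y^{5/6}\log^{24}x}$ is the genuinely analytic error coming from the truncation of Perron's formula and the estimation of $\zeta_{\mathbb{K}}(s)$ on vertical lines; the exponent $5/6$ and the high log power strongly suggest the use of a subconvexity or mean-value bound for $\zeta_{\mathbb{K}}(s)$ on the critical strip for a degree-$2$ field (where $\zeta_{\mathbb{K}} = \zeta \cdot L(\cdot,\chi)$ factors as a product of two degree-$1$ $L$-functions, so the relevant exponent matches known second-moment bounds).

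The main obstacle, I anticipate, is the simultaneous control of both the arithmetic (off-diagonal) error and the analytic (contour) error across the two ranges $x \le y < x^2$ and $x^2 \le y < x^3$, and in particular correctly isolating the $x^4$ secondary term. Verifying that the coefficient of $x^4$ is exactly $-\frac{\rho_{\mathbb{K}}^2 \zeta_{\mathbb{K}}(0)}{4\zeta_{\mathbb{K}}(2)^2}$ will require carefully tracking the residue of the relevant Dirichlet series at the secondary pole and using the value of $\zeta_{\mathbb{K}}$ at $0$ via its functional equation; getting the constant right, rather than merely its order of magnitude, is where the bookkeeping will be most delicate.
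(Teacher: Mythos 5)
Your outline breaks down at its first substantive step, and the gap propagates through everything that follows. You claim that the inner M\"obius sum $\sum_{\mathcal{N}(\mathcal{J}')\le x/\mathcal{N}(\mathcal{I}_1)}\mu(\mathcal{J}')$ produces ``a main term proportional to $x$ (governed by $\rho_{\mathbb{K}}$)'', and from this you derive a pointwise approximation $S(\mathcal{I})\approx\frac{\rho_{\mathbb{K}}}{\zeta_{\mathbb{K}}(2)}\,x\,g(\mathcal{I})$ with $g$ multiplicative. No such main term exists: the Dirichlet series of $\mu$ over ideals is $1/\zeta_{\mathbb{K}}(s)$, which is analytic at $s=1$ (indeed it vanishes there, since $\zeta_{\mathbb{K}}$ has a simple pole), so the ideal Mertens function is $o(u)$, and unconditionally one saves only a factor $\exp(-c\sqrt{\log u})$, never a power of $u$. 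A concrete falsification of your approximation: for the unit ideal, multiplicativity forces $g(\mathcal{O}_{\mathbb{K}})=1$, yet $C_{\mathcal{J}}(\mathcal{O}_{\mathbb{K}})=\mu(\mathcal{J})$, so $S(\mathcal{O}_{\mathbb{K}})=\sum_{\mathcal{N}(\mathcal{J})\le x}\mu(\mathcal{J})=o(x)$, not $\asymp x$. Since $g$ cannot exist, the ``diagonal'' Dirichlet series $\sum_{\mathcal{I}}g(\mathcal{I})^2\mathcal{N}(\mathcal{I})^{-s}=\zeta_{\mathbb{K}}(s)H(s)$ and the residue computation you build on it have no content; moreover, the power savings in the stated errors ($yx^{2-\epsilon}$ and $x^2y^{5/6}$) can never be reached by any estimate on the ideal M\"obius sum, which is the only input your reduction allows.

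The repair is to avoid approximating $S(\mathcal{I})$ pointwise altogether and keep it exact. The paper applies the truncated Perron formula (Lemma \ref{parronlemma}) to $\sum_{\mathcal{J}}C_{\mathcal{J}}(\mathcal{I})\mathcal{N}(\mathcal{J})^{-s}=\sigma_{\mathbb{K},(1-s)}(\mathcal{I})/\zeta_{\mathbb{K}}(s)$ (Lemma \ref{lemmaone}) on \emph{two distinct} vertical lines $\beta_1,\beta_2$, multiplies the two integral representations, and sums over $\mathcal{N}(\mathcal{I})\le y$ under the double integral. This reduces the theorem to an asymptotic for the bivariate divisor sum $G(s_1,s_2,y)=\sum_{\mathcal{N}(\mathcal{I})\le y}\sigma_{\mathbb{K},(1-s_1)}(\mathcal{I})\sigma_{\mathbb{K},(1-s_2)}(\mathcal{I})$, which is the paper's key Lemma \ref{lemmasix}: its Dirichlet series is $\zeta_{\mathbb{K}}(s)\zeta_{\mathbb{K}}(s-z_1)\zeta_{\mathbb{K}}(s-z_2)\zeta_{\mathbb{K}}(s-z_1-z_2)/\zeta_{\mathbb{K}}(2s-z_1-z_2)$ (Lemma \ref{lemmafive}), and Perron plus a contour shift, Shiu's bound (Lemma \ref{averagelemma}) for the remainder, H\"older's inequality, and the convexity bounds \eqref{bound} give four main terms together with the error that ultimately becomes $x^2y^{5/6}\log^{24}x$. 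Substituting these four terms produces four double integrals $I_1,\dots,I_4$; shifting contours, $I_1$ has a pole at $s_2=2-s_1$ yielding $\frac{\rho_{\mathbb{K}}^2x^2y}{2\zeta_{\mathbb{K}}(2)}$, and $I_2$ has a pole at $s_1=s_2$ yielding exactly $-\frac{\rho_{\mathbb{K}}^2\zeta_{\mathbb{K}}(0)}{4\zeta_{\mathbb{K}}(2)^2}x^4$ --- so your instinct that the $x^4$ term comes from a secondary pole involving $\zeta_{\mathbb{K}}(0)$ is right in spirit, but that pole lives in the two-variable contour geometry, not in any one-variable series --- while $I_3,I_4$ contribute only to the error; the choice $T=x^{3/2-\epsilon}$ then produces $yx^{2-\epsilon}\log^7x$. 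Without the two-variable Perron representation and Lemma \ref{lemmasix}, none of these objects is available to you.
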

 With regard to other degree two extensions, for example for the field of Gaussian integers, Nowak \cite{nowak13} proved \eqref{nowak} with uniform error terms  with $\delta>\frac{29}{12}=2.416\cdots$. His result was later improved in \cite{zhai2021average} for $\delta>2.3235\cdots$. 
 
%Later,  Nowak \cite{nowak13} proved \eqref{nowak} for the field of Gaussian integers with $\delta>\frac{29}{12}=2.416\cdots$ \[\sum_{0<\mathcal{N}(\mathcal{I})\le y}\sum_{0<\mathcal{N}(\mathcal{J})\le x}C_{\mathcal{J}}({\mathcal{I}})= \frac{\pi}{4}y+\BigO{x^{29/20}y^{2/5+\epsilon}+x^{37/20}y^{1/5}}.\]. Zhai \cite{zhai2021average} improved Nowak's result for $\delta>2.406\cdots$.
%He proved that, for a fixed quadratic number field $\mathbb{K}=\mathbb{Q}(\sqrt{d})$. $y\ge x\ge 3$ are the large numbers and $(k, \lambda)$ is any exponent pair. Then for $y>x^{\delta}$ for some \[\delta >\max\left(\frac{17-12\lambda+14k}{8-6\lambda+6k},\frac{3-8\lambda+10k}{6-4\lambda+4k}\right),\]  \[S_{\mathbb{K},1}(x,y)=\rho y+O(x^2y^{\epsilon}+x^{\frac{17-12\lambda+14k}{13-10\lambda+12k}}y^{\frac{5-4\lambda+6k}{13-10\lambda+12k}+\epsilon})+\BigO{x^{\frac{13-8\lambda+10k}{9-6\lambda+8k}}y^{\frac{3-2\lambda+4k}{9-6\lambda+8k}+\epsilon}}.\] 
%In continuation, Fujisawa \cite{MR3332952} proved that if $\mathbb{K}$ is any number field, then for some $c>0$, and for any $\delta>\frac{2-\alpha}{1-\alpha}$ where $\alpha\in [0,1)$, with condition $y\ll x^{\delta}$ \[\sum_{0<\mathcal{N}(\mathcal{I})\le y}\sum_{0<\mathcal{N}(\mathcal{J})\le x}C_{\mathcal{J}}({\mathcal{I}})= \rho_{\mathbb{K}}y+o(y).\numberthis\label{fujisawa}\]Recently, the authors in \cite{ma2021average} prove asymptotic \eqref{nowak} for a 
For the cubic case, a result on the first moment is derived in \cite{ma2021average} 
where the authors obtain an asymptotic formula \eqref{nowak} with condition $y>x^{11/4}$. 
 \begin{align*}
            \sum_{0<\mathcal{N}(\mathcal{I})\le y}\sum_{0<\mathcal{N}(\mathcal{J})\le x}C_{\mathcal{J}}({\mathcal{I}})=&\rho_{\mathbb{K}} y+\BigO{x^{\frac{8}{5}}y^{\frac{2}{5}+\epsilon}+x^{\frac{11}{8}}y^{\frac{1}{2}+\epsilon}}.\end{align*}
We obtain estimates on the second moment for a cubic number field in the following theorem.
\begin{thm}\label{theorem4}
Let $\mathbb{K}$ be a cubic number field, then for any  $\epsilon>0$, and $x\le y <x^2$ 
   \begin{align*}
        \sum_{0<\mathcal{N}(\mathcal{I})\le y} \left(\sum_{0<\mathcal{N}(\mathcal{J})\le x}C_{\mathcal{J}}({\mathcal{I}})\right)^2&= \frac{\rho_{\mathbb{K}}^2}{2\zeta_{\mathbb{K}}(2)}yx^2-\frac{\rho_{\mathbb{K}}^2\zeta_{\mathbb{K}}(0)}{4\zeta_{\mathbb{K}}(2)^2 }x^4+\BigO{yx^{2-\epsilon}\log^{10} x +x^2y^{13/16}\log^{31}x},
   \end{align*}
   and for $x^2\le y <x^3$
    \begin{align*}
        \sum_{0<\mathcal{N}(\mathcal{I})\le y} \left(\sum_{0<\mathcal{N}(\mathcal{J})\le x}C_{\mathcal{J}}({\mathcal{I}})\right)^2&= \frac{\rho_{\mathbb{K}}^2}{2\zeta_{\mathbb{K}}(2)}yx^2+\BigO{yx^{2-\epsilon}\log^{10} x+x^2y^{13/16}\log^{31}x}.
   \end{align*}
   \end{thm}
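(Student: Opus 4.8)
The plan is to reduce the inner sum to a Mertens-type sum over ideals, expand the square, and separate the contribution of the ideal-counting function of $\mathbb{K}$ into a main part and an error part. First I would record the elementary identity for the inner sum: writing $\mathcal{J}=\mathcal{I}_1\mathcal{D}$ in \eqref{one} and interchanging the order of summation gives
\[
S(x,\mathcal{I}):=\sum_{0<\mathcal{N}(\mathcal{J})\le x}C_{\mathcal{J}}(\mathcal{I})=\sum_{\mathcal{I}_1\mid\mathcal{I}}\mathcal{N}(\mathcal{I}_1)\,M_{\mathbb{K}}\!\left(\frac{x}{\mathcal{N}(\mathcal{I}_1)}\right),
\]
where $M_{\mathbb{K}}(t)=\sum_{0<\mathcal{N}(\mathcal{D})\le t}\mu(\mathcal{D})$ is the Dedekind--Mertens function, with Dirichlet series $1/\zeta_{\mathbb{K}}(s)$; since $M_{\mathbb{K}}(t)=0$ for $t<1$, only $\mathcal{N}(\mathcal{I}_1)\le x$ contribute. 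Squaring and summing over $\mathcal{N}(\mathcal{I})\le y$, the conditions $\mathcal{I}_1\mid\mathcal{I}$ and $\mathcal{I}_2\mid\mathcal{I}$ merge into $\mathcal{L}\mid\mathcal{I}$ with $\mathcal{L}=\operatorname{lcm}(\mathcal{I}_1,\mathcal{I}_2)$, so that
\[
\sum_{0<\mathcal{N}(\mathcal{I})\le y}S(x,\mathcal{I})^2=\sum_{\mathcal{I}_1,\mathcal{I}_2}\mathcal{N}(\mathcal{I}_1)\mathcal{N}(\mathcal{I}_2)\,M_{\mathbb{K}}\!\left(\tfrac{x}{\mathcal{N}(\mathcal{I}_1)}\right)M_{\mathbb{K}}\!\left(\tfrac{x}{\mathcal{N}(\mathcal{I}_2)}\right)A_{\mathbb{K}}\!\left(\tfrac{y}{\mathcal{N}(\mathcal{L})}\right),
\]
where $A_{\mathbb{K}}(u)=\#\{\mathcal{I}:0<\mathcal{N}(\mathcal{I})\le u\}=\rho_{\mathbb{K}}u+\Delta_{\mathbb{K}}(u)$ and $\Delta_{\mathbb{K}}$ is the error term in the cubic ideal-counting problem.

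Next I would split according to $A_{\mathbb{K}}=\rho_{\mathbb{K}}u+\Delta_{\mathbb{K}}$ and treat the main part first. Using $\mathcal{N}(\mathcal{I}_1)\mathcal{N}(\mathcal{I}_2)=\mathcal{N}(\gcd(\mathcal{I}_1,\mathcal{I}_2))\mathcal{N}(\mathcal{L})$ together with the totient identity $\mathcal{N}(\gcd(\mathcal{I}_1,\mathcal{I}_2))=\sum_{\mathcal{E}\mid\mathcal{I}_1,\ \mathcal{E}\mid\mathcal{I}_2}\phi_{\mathbb{K}}(\mathcal{E})$, where $\phi_{\mathbb{K}}$ is the ideal totient with $\sum_{\mathcal{E}\mid\mathcal{A}}\phi_{\mathbb{K}}(\mathcal{E})=\mathcal{N}(\mathcal{A})$, the factor $\mathcal{N}(\mathcal{L})$ cancels and the double sum decouples. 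The decisive simplification is the exact evaluation
\[
\sum_{\substack{\mathcal{E}\mid\mathcal{I}\\0<\mathcal{N}(\mathcal{I})\le x}}M_{\mathbb{K}}\!\left(\frac{x}{\mathcal{N}(\mathcal{I})}\right)=1\qquad(\mathcal{N}(\mathcal{E})\le x),
\]
a consequence of $\sum_{\mathcal{D}\mid\mathcal{M}}\mu(\mathcal{D})$ being $1$ when $\mathcal{M}=\mathcal{O}_{\mathbb{K}}$ and $0$ otherwise. When $y\ge x^2$ the condition $\mathcal{N}(\mathcal{L})\le y$ is automatic, the two inner sums each collapse to $1$, and the main part reduces to $\rho_{\mathbb{K}}y\sum_{\mathcal{N}(\mathcal{E})\le x}\phi_{\mathbb{K}}(\mathcal{E})$. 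A Perron/contour evaluation of this sum via $\sum_{\mathcal{E}}\phi_{\mathbb{K}}(\mathcal{E})\mathcal{N}(\mathcal{E})^{-s}=\zeta_{\mathbb{K}}(s-1)/\zeta_{\mathbb{K}}(s)$, whose only relevant singularity is the simple pole of $\zeta_{\mathbb{K}}(s-1)$ at $s=2$ with residue $\rho_{\mathbb{K}}$, produces the leading term $\tfrac{\rho_{\mathbb{K}}^2}{2\zeta_{\mathbb{K}}(2)}x^2y$. When $x\le y<x^2$ the condition $\mathcal{N}(\mathcal{L})\le y$ is active; subtracting the tail $\mathcal{N}(\mathcal{L})>y$ and evaluating it by the same machinery — where the truncation brings in the value $\zeta_{\mathbb{K}}(0)$ through the functional equation of $\zeta_{\mathbb{K}}$ — yields the secondary term $-\tfrac{\rho_{\mathbb{K}}^2\zeta_{\mathbb{K}}(0)}{4\zeta_{\mathbb{K}}(2)^2}x^4$, which is absent precisely when $y\ge x^2$. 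The error incurred in these Mertens-weighted evaluations is controlled by a classical zero-free region for $\zeta_{\mathbb{K}}$ and accounts for the term $yx^{2-\epsilon}\log^{10}x$, the logarithmic powers accumulating from the successive Perron truncations.

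The main obstacle is the remaining error sum
\[
\Sigma_\Delta=\sum_{\substack{\mathcal{I}_1,\mathcal{I}_2\\\mathcal{N}(\mathcal{L})\le y}}\mathcal{N}(\mathcal{I}_1)\mathcal{N}(\mathcal{I}_2)\,M_{\mathbb{K}}\!\left(\tfrac{x}{\mathcal{N}(\mathcal{I}_1)}\right)M_{\mathbb{K}}\!\left(\tfrac{x}{\mathcal{N}(\mathcal{I}_2)}\right)\Delta_{\mathbb{K}}\!\left(\tfrac{y}{\mathcal{N}(\mathcal{L})}\right).
\]
A purely pointwise bound fails here: inserting $|\Delta_{\mathbb{K}}(u)|\ll u^{\theta}$ and $\mathcal{N}(\mathcal{I}_j)\,M_{\mathbb{K}}(x/\mathcal{N}(\mathcal{I}_j))\ll x$ only returns something of the size of the main term. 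One must instead exploit cancellation in $\Delta_{\mathbb{K}}$ on average — either through a mean-square estimate combined with Cauchy--Schwarz, or by inserting a truncated Voronoi/Perron expansion of $\Delta_{\mathbb{K}}(y/\mathcal{N}(\mathcal{L}))$ and bounding the resulting exponential sums over $\mathcal{L}$, keeping the weights $\mathcal{N}(\mathcal{I}_1)\mathcal{N}(\mathcal{I}_2)M_{\mathbb{K}}M_{\mathbb{K}}$ with their signs. This is exactly the step that is genuinely harder than in the quadratic Theorem~\ref{theorem2}: whereas for a quadratic field $\Delta_{\mathbb{K}}$ is governed by the Dirichlet divisor and Gauss circle problems, for a cubic field it is tied to a degree-three lattice-point count and to the growth of $\zeta_{\mathbb{K}}$ on vertical lines. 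Feeding in the corresponding cubic mean-value/subconvexity estimate — the cubic analogue of the inputs used in \cite{ma2021average} — gives $\Sigma_\Delta\ll x^2y^{13/16}\log^{31}x$, the source of the second error term and of the heavier logarithmic weight. The final task is bookkeeping: verifying that all error terms stay below the stated thresholds uniformly in both ranges $x\le y<x^2$ and $x^2\le y<x^3$, while extracting the exact constants $\tfrac{\rho_{\mathbb{K}}^2}{2\zeta_{\mathbb{K}}(2)}$ and $\tfrac{\rho_{\mathbb{K}}^2\zeta_{\mathbb{K}}(0)}{4\zeta_{\mathbb{K}}(2)^2}$.
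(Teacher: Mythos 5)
Your route is genuinely different from the paper's: the paper proves Theorem \ref{theorem4} analytically, by writing the inner sum as a Perron integral of $\sigma_{\mathbb{K},1-s}(\mathcal{I})/\zeta_{\mathbb{K}}(s)$, squaring, summing over $\mathcal{I}$, and feeding the resulting divisor-correlation sum into the key estimate of Lemma \ref{lemmasix}, whereas you follow the elementary gcd/lcm decomposition that the paper itself uses only for Theorem \ref{theorem5}. Your main-term computation (the identity $\mathcal{N}(\mathcal{I}_1)\mathcal{N}(\mathcal{I}_2)=\mathcal{N}(\gcd)\mathcal{N}(\mathcal{L})$, the totient decoupling, the collapse of $\sum_{\mathcal{E}\mid\mathcal{I},\,\mathcal{N}(\mathcal{I})\le x}M_{\mathbb{K}}(x/\mathcal{N}(\mathcal{I}))$ to $1$, and the evaluation of $\sum_{\mathcal{N}(\mathcal{E})\le x}\phi_{\mathbb{K}}(\mathcal{E})$) is in substance the paper's proof of Theorem \ref{theorem5} and is sound. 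But that elementary argument only delivers the first main term with error $\BigO{xy\log x+x^{3-\alpha}y^{\alpha}}$ (cf.\ Remark \ref{remarktwo}), and the two features that make Theorem \ref{theorem4} stronger are exactly the steps you assert without proof.

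First, the error sum $\Sigma_\Delta$: you correctly diagnose that pointwise bounds on $\Delta_{\mathbb{K}}$ fail, but you then simply declare that unspecified mean-square/Voronoi/subconvexity inputs give $\Sigma_\Delta\ll x^2y^{13/16}\log^{31}x$. No such estimate is derived, and the exponents are lifted from the statement rather than produced by your method: in the paper, $13/16$ and $\log^{31}$ arise from a very specific mechanism inside Lemma \ref{lemmasix} — Perron's formula for $\sum_{\mathcal{N}(\mathcal{I})\le y}\sigma_{\mathbb{K},1-s_1}(\mathcal{I})\sigma_{\mathbb{K},1-s_2}(\mathcal{I})$, H\"older's inequality producing fourth powers of the cubic bounds \eqref{cbound} (whence $\log^{4\cdot 7+3}=\log^{31}$), and the truncation $T=x^{3/16}$ — none of which exists in your framework, where the argument of $\Delta_{\mathbb{K}}$ is $y/\mathcal{N}(\mathcal{L})$ with $\mathcal{L}$ running over lcms weighted by signed Mertens factors. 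Second, the secondary term: you say the tail $\mathcal{N}(\mathcal{L})>y$ "brings in $\zeta_{\mathbb{K}}(0)$ through the functional equation," but this is a real computation you never perform; in the paper the constant comes from a concrete residue, namely the pole at $s_2=2$ of the integrand $\zeta_{\mathbb{K}}(2-s_2)x^{2s_2}y^{-s_2}/\bigl(s_2^2(2-s_2)\zeta_{\mathbb{K}}(2)\zeta_{\mathbb{K}}(s_2)\bigr)$ in the integral $I_2$, which yields precisely $-\rho_{\mathbb{K}}^2\zeta_{\mathbb{K}}(0)x^4/(4\zeta_{\mathbb{K}}(2)^2)$, and your elementary tail sum would need its own second-order asymptotic with error matching $yx^{2-\epsilon}\log^{10}x$. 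As it stands, both error terms and the secondary main term rest on unsubstantiated claims, so the proposal identifies the right obstacles but does not overcome them.
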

  For mean values of Ramanujan sums over general number fields, the only known result is due to Fujisawa \cite{MR3332952} who proved that if $\mathbb{K}$ is any number field, then for some $c>0$, and for any $\delta>\frac{2-\alpha}{1-\alpha}$ where $\alpha\in [0,1)$, with condition $y\ll x^{\delta}$, \[\sum_{0<\mathcal{N}(\mathcal{I})\le y}\sum_{0<\mathcal{N}(\mathcal{J})\le x}C_{\mathcal{J}}({\mathcal{I}})= \rho_{\mathbb{K}}y+o(y).\numberthis\label{fujisawa}\]
  His result is a consequence of a more general theorem on moments of Ramanujan sums over Dedekind domains \cite[Theorem 1] {MR3332952}. In the next theorem, we derive asymptotic results for the second moment of Ramanujan sums over Prüfer domains
   \begin{defn}
   An integral domain $R$ is called a Pr\"ufer domain if every finitely generated non-zero ideal of $R$
is invertible.
\end{defn}
%A  Prüfer domain is a commutative ring without zero divisors in which every non-zeo finitely generated ideal is invertible.
For our computations of the second moment, we use the following ideal property of Pr\"ufer domains:
If $\mathcal{I}$, and $\mathcal{J}$ are two ideals of a Prüfer domain, then 
   \[(\mathcal{I}+\mathcal{J})(\mathcal{I}\cap\mathcal{J})=\mathcal{I}\mathcal{J}. \numberthis\label{property1}\]
  Some examples of a Pr\"ufer domain consist of the ring of algebraic integers, the ring of entire functions in $\mathbb{C}$. For more on multiplicative ideal theory and Pr\"ufer domains, see \cite[Chapter 4]{gilmer1992multiplicative}.
   
   %\begin{enumerate}
  %  \item  Every number ring is a Dedekind domain, and their union, the ring of algebraic integers, is a Prüfer domain. 
  %  \item The ring of entire functions on the open complex plane $\mathbb{C}$ form a Prüfer domain.
%\end{enumerate}
\begin{thm}\label{theorem5}
 Let $\mathbb{K}$ be a number field such that its ring of integers $\mathcal{O}_{\mathbb{K}}$ is a  Pr\"ufer domain. if \[\sum_{1\le \mathcal{N}(\mathcal{I})\le y}1=\rho_{\mathbb{K}} y+\BigO{y^{\alpha}},\]then for $x^{\lambda}<y$  for some $\lambda>1$, we have
         \[ \sum_{0<\mathcal{N}(\mathcal{I})\le y} \left(\sum_{0<\mathcal{N}(\mathcal{J})\le x}C_{\mathcal{J}}({\mathcal{I}})\right)^2=\dfrac{\rho_{\mathbb{K}}^2}{2\zeta_{\mathbb{K}}(2)}x^2y+\BigO{xy\log x+ x^{3-\alpha}y^{\alpha}}.\]
\end{thm}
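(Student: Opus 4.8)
The plan is to collapse the inner sum to a divisor sum built from the ideal Mertens function, to linearise the norm of the least common multiple via the Prüfer property, and then to extract the main term through a single Möbius collapse. First I would rewrite the inner sum: interchanging the order of summation in $C_{\mathcal{J}}(\mathcal{I})$ and writing each $\mathcal{J}$ with $\mathcal{I}_1\mid\mathcal{J}$ as $\mathcal{J}=\mathcal{I}_1\mathcal{K}$ gives
\[
\sum_{0<\mathcal{N}(\mathcal{J})\le x}C_{\mathcal{J}}(\mathcal{I})
=\sum_{\mathcal{I}_1\mid\mathcal{I}}\mathcal{N}(\mathcal{I}_1)\,M_{\mathbb{K}}\Big(\tfrac{x}{\mathcal{N}(\mathcal{I}_1)}\Big),
\qquad
M_{\mathbb{K}}(t):=\sum_{0<\mathcal{N}(\mathcal{K})\le t}\mu(\mathcal{K}).
\]
The identity that powers the whole computation is that, for a fixed ideal $\mathcal{D}$,
\[
\sum_{\mathcal{D}\mid\mathcal{I}_1}M_{\mathbb{K}}\Big(\tfrac{x}{\mathcal{N}(\mathcal{I}_1)}\Big)
=\sum_{0<\mathcal{N}(\mathcal{M})\le x/\mathcal{N}(\mathcal{D})}\;\sum_{\mathcal{L}\mid\mathcal{M}}\mu(\mathcal{L})
=\begin{cases}1,&\mathcal{N}(\mathcal{D})\le x,\\ 0,&\text{otherwise,}\end{cases}
\]
since $\sum_{\mathcal{L}\mid\mathcal{M}}\mu(\mathcal{L})$ is supported on $\mathcal{M}=\mathcal{O}_{\mathbb{K}}$.

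Next I would square, swap summation, and for fixed $\mathcal{I}_1,\mathcal{I}_2$ count the ideals $\mathcal{I}$ with $\mathcal{N}(\mathcal{I})\le y$ divisible by $L:=\mathcal{I}_1\cap\mathcal{I}_2$. Here the Prüfer hypothesis \eqref{property1} is indispensable: from $(\mathcal{I}_1+\mathcal{I}_2)(\mathcal{I}_1\cap\mathcal{I}_2)=\mathcal{I}_1\mathcal{I}_2$ and multiplicativity of the norm I get $\mathcal{N}(L)=\mathcal{N}(\mathcal{I}_1)\mathcal{N}(\mathcal{I}_2)/\mathcal{N}(\mathcal{I}_1+\mathcal{I}_2)$. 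Inserting $\#\{\mathcal{N}(\mathcal{I})\le z\}=\rho_{\mathbb{K}}z+\BigO{z^{\alpha}}$ with $z=y/\mathcal{N}(L)$ then splits the second moment into a main term $\rho_{\mathbb{K}}y\sum_{\mathcal{I}_1,\mathcal{I}_2}\mathcal{N}(g)\,M_1M_2$, with $g=\mathcal{I}_1+\mathcal{I}_2$ and $M_i=M_{\mathbb{K}}(x/\mathcal{N}(\mathcal{I}_i))$, plus a remainder governed by $\BigO{(y/\mathcal{N}(L))^{\alpha}}$.

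For the main term I would use $\mathcal{N}(g)=\sum_{\mathcal{D}\mid g}\phi_{\mathbb{K}}(\mathcal{D})$, where $\phi_{\mathbb{K}}=\mu*\mathcal{N}$, and observe that $\mathcal{D}\mid g$ is equivalent to $\mathcal{D}\mid\mathcal{I}_1$ and $\mathcal{D}\mid\mathcal{I}_2$; applying the collapse of the first paragraph in each variable gives
\[
\rho_{\mathbb{K}}y\sum_{\mathcal{I}_1,\mathcal{I}_2}\mathcal{N}(g)\,M_1M_2
=\rho_{\mathbb{K}}y\sum_{0<\mathcal{N}(\mathcal{D})\le x}\phi_{\mathbb{K}}(\mathcal{D})
=\frac{\rho_{\mathbb{K}}^2}{2\zeta_{\mathbb{K}}(2)}x^2y+\BigO{xy\log x},
\]
the evaluation of $\sum_{\mathcal{N}(\mathcal{D})\le x}\phi_{\mathbb{K}}(\mathcal{D})$ following by partial summation from the counting hypothesis together with $\sum_{\mathcal{N}(\mathcal{E})\le x}\mu(\mathcal{E})\mathcal{N}(\mathcal{E})^{-2}=\zeta_{\mathbb{K}}(2)^{-1}+\BigO{x^{-1}}$.

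The hard part will be the remainder together with the truncation $\mathcal{N}(L)\le y$. Two points deserve care. First, the previous step tacitly summed over all $\mathcal{I}_1,\mathcal{I}_2$ with $\mathcal{N}(\mathcal{I}_i)\le x$, whereas only $\mathcal{N}(L)\le y$ contributes; since $\mathcal{N}(L)\le\mathcal{N}(\mathcal{I}_1)\mathcal{N}(\mathcal{I}_2)\le x^2$, the discarded part $\rho_{\mathbb{K}}y\sum_{\mathcal{N}(L)>y}\mathcal{N}(g)M_1M_2$ is empty once $y\ge x^2$, and for smaller $y$ it produces a secondary term which the hypothesis $y>x^{\lambda}$, with $\lambda$ taken large in terms of $\alpha$, renders negligible. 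Second, and this is the genuine obstacle, one must bound $\sum_{\mathcal{N}(L)\le y}\mathcal{N}(\mathcal{I}_1)\mathcal{N}(\mathcal{I}_2)M_1M_2\,R(y/\mathcal{N}(L))$, where $R(z)\ll z^{\alpha}$. The trivial estimate $|M_{\mathbb{K}}(t)|\ll t$ only yields $\BigO{x^{4-2\alpha}y^{\alpha}}$, larger than the claimed error by a factor $x^{1-\alpha}$; to recover the missing power one cannot pass to absolute values but must retain the oscillation of $M_1M_2$. I expect the decisive step to be the substitution $\mathcal{I}_1=g\mathcal{A}$, $\mathcal{I}_2=g\mathcal{B}$ with $\mathcal{A}+\mathcal{B}=\mathcal{O}_{\mathbb{K}}$, detaching coprimality by Möbius so that the inner sum over $\mathcal{A},\mathcal{B}$ again collapses through the identity of the first paragraph to a single Mertens value $M_{\mathbb{K}}(x/\mathcal{N}(g))$; the resulting one-dimensional cancellation, fed back through partial summation against the counting remainder, should supply the factor $x^{1-\alpha}$ and deliver $\BigO{x^{3-\alpha}y^{\alpha}}$.
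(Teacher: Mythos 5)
Your reduction and main-term computation are correct and, modulo bookkeeping, they \emph{are} the paper's own proof: the paper likewise expands the square, counts ideals of norm at most $y$ divisible by $\mathcal{I}_1\cap\mathcal{I}_2$, converts $\mathcal{N}(\mathcal{I}_1\cap\mathcal{I}_2)=\mathcal{N}(\mathcal{I}_1)\mathcal{N}(\mathcal{I}_2)/\mathcal{N}(\mathcal{I}_1+\mathcal{I}_2)$ by the Pr\"ufer property \eqref{property1}, parametrizes by the gcd with M\"obius detection of coprimality (your route via $\mathcal{N}=\phi_{\mathbb{K}}*1$ and the collapse identity is the same manoeuvre), and reduces the main term to $\rho_{\mathbb{K}}y\sum_{\mathcal{N}(\mathcal{A}\mathcal{M})\le x}\mathcal{N}(\mathcal{A})\mu(\mathcal{M})$, i.e.\ to the summatory function of $\phi_{\mathbb{K}}$. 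Your first ``point of care'' is in fact vacuous: when $z=y/\mathcal{N}(L)<1$ the counting hypothesis reads $0=\rho_{\mathbb{K}}z+\BigO{z^{\alpha}}$, which holds trivially since $z\le z^{\alpha}$ for $z\le1$, so the formula may be inserted for every pair $(\mathcal{I}_1,\mathcal{I}_2)$ and no lower bound on $\lambda$ beyond $\lambda>1$ enters at that step.

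The genuine gap is the one you flagged yourself, and the repair you sketch cannot close it. In the remainder $\sum_{\mathcal{I}_1,\mathcal{I}_2}\mathcal{N}(\mathcal{I}_1)\mathcal{N}(\mathcal{I}_2)M_1M_2\,R(y/\mathcal{N}(L))$ (your notation: $L=\mathcal{I}_1\cap\mathcal{I}_2$, $g=\mathcal{I}_1+\mathcal{I}_2$, $M_i=M_{\mathbb{K}}(x/\mathcal{N}(\mathcal{I}_i))$), the function $R$ is known only through $|R(z)|\ll z^{\alpha}$, and its argument $y/(\mathcal{N}(g)\mathcal{N}(\mathcal{A})\mathcal{N}(\mathcal{B}))$ involves exactly the variables $\mathcal{A},\mathcal{B}$ you propose to collapse. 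A weight depending on $\mathcal{N}(\mathcal{A}),\mathcal{N}(\mathcal{B})$ destroys the identity $\sum_{\mathcal{D}\mid\mathcal{I}_1}M_{\mathbb{K}}(x/\mathcal{N}(\mathcal{I}_1))=1$, which requires the cofactor sum to carry no other dependence; and since the sign of $R$ at those arguments is unknown, no oscillation of $M_1M_2$ can be set against it. One is therefore forced to absolute values, and the resulting positive majorant is genuinely of size $x^{4-2\alpha}y^{\alpha}$ (the bulk coming from coprime pairs with both norms of order $x$, where $M_1M_2=1$), exactly as you computed. You should know that the paper fares no better here: its bound for this term (the quantity it calls $I_2$) also takes absolute values throughout, and its final assertion $I_2\ll y^{\alpha}x^{3-\alpha}$ rests on squaring $\sum_{\mathcal{N}(\mathcal{E}\mathcal{J})\le z}\mathcal{N}(\mathcal{E})^{1-\alpha}\ll z^{2-\alpha}$ to the exponent $(2-\alpha)^2$ instead of $2(2-\alpha)$; with the exponent corrected, its own display also yields only $y^{\alpha}x^{4-2\alpha}$. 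This is consistent with the classical case $\mathbb{K}=\mathbb{Q}$, where one may take $\alpha=0$ and the same elementary argument in Chan--Kumchev produces the error $x^4$, not $x^3$. So what your proposal (and, read honestly, the paper's proof) establishes is the asymptotic with error $\BigO{xy\log x+x^{4-2\alpha}y^{\alpha}}$, which suffices only when $y/x^{2}\to\infty$; the stated error $\BigO{x^{3-\alpha}y^{\alpha}}$, valid for every $\lambda>1$, is not reachable by this route, and closing that gap would require genuinely new input about the sign or cancellation of the ideal-counting remainder $R$.
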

The value of $\alpha$ was estimated by Landau \cite{landau1949einfuhrung} to be $(n-1)/(n+1)$, where $n$ is the degree of $\mathbb{K}$ over $\mathbb{Q}$. This was later improved by Nowak \cite{Nowak12} and M\"uller \cite{muller1988distribution} for the case $n=2$ and $n=3$, respectively.
\begin{rem} \label{remarkone}
Theorem \ref{theorem5} holds for any number field whose corresponding ring of integers satisfies property \eqref{property1}. For the ring of integers $\mathbb{Z}$, \eqref{property1} reduces to the fact that the gcd times lcm of any two integers is equal to the product of the integers. This property is not valid for more than two integers, complicating the computations for higher moments $(k\ge 3)$.
\end{rem}
\begin{rem}\label{remarktwo}
 Theorems \ref{theorem2} and \ref{theorem4} are special cases of Theorem \ref{theorem5} with additional main terms in certain ranges of $y$, as the ring of integers for both quadratic and cubic number field is a  Prüfer domain. The constants in Theorems \ref{theorem2} and \ref{theorem4} depend on the discriminant of the corresponding number fields.
\end{rem}
\subsection{Organization}
This article is organized as follows. Section \ref{sec1} covers preliminary results required to prove Theorems \ref{theorem2}, \ref{theorem4}, and \ref{theorem5}. Section \ref{sec2} contains a key result involving average of product of divisor functions over number fields. Section \ref{sec3} contains proofs of Theorems \ref{theorem2} and \ref{theorem4} invoking the key estimate proved in Section \ref{sec2}. Finally, Section \ref{sec4} contains proof of Theorem \ref{theorem5}.
\subsection{Notations} Throughout this note, we use $\mathbb{Z}$, $\mathbb{Q}$, and $\mathbb{K}$ to denote the set of integers, set of rational numbers, and a number field, respectively. We denote complex numbers $z=\sigma+it$, $z_1=a_1+ib_1$, and $z_2=a_2+ib_2$.  We use $\phi$
to denote the Euler totient function, $\mu$ the Mobius function, $\zeta_{\mathbb{K}}(s)$ the Dedekind zeta function corresponding to a number field $\mathbb{K},$ and $\zeta(s)$ the Riemann zeta function. We use the Vinogradov $\ll$ asymptotic notation, and the big oh O(·) and o(·)
asymptotic notation. Dependence on a parameter will be denoted by a subscript.
%The notation $f=\BigO{g}$ means $|f|<Cg$ for some constant $C>0$, and $f=\BigOe{g}$ implies that the constant $C$ depends on $\epsilon$. We also use Vinogradov notation $f\ll g$ which says $|f|<Cg$ for some constant $C>0$, and $f\ll_q g$ means that the constant $C$ in Vinogradov notation depends on $q$.
\subsection{Acknowledgements}
The first author is grateful for the support from the Science and Engineering Research Board, Department of Science and Technology, Government of India under grant SB/S2/RJN-053/2018.
\section{Preliminaries} \label{sec1}
In this section, we state and prove some results related to the Dirichlet series of functions appearing in the proofs of Theorems \ref{theorem2} and \ref{theorem4}. We start by recalling the Dirichlet series of $C_{\mathcal{J}}({\mathcal{I}})$.
\begin{lem}\label{lemmaone}
For a number field $\mathbb{K}$ and for $\Re(s)>1$, one has\[\sum_{\mathcal{J}\subseteq \mathcal{O}_{\mathbb{K}}}\frac{C_{\mathcal{J}}({\mathcal{I}})}{\mathcal{N}(\mathcal{J})^s}=\frac{\sigma_{\mathbb{K},(1-s)}(\mathcal{I})}{\zeta_{\mathbb{K}}(s)},\numberthis\label{two}\]
where $\sigma_{\mathbb{K},(1-s)}(\mathcal{I})=\sum_{\mathcal{I}_1|\mathcal{I}}\mathcal{N}(\mathcal{I}_1)^{1-s}.$
\end{lem}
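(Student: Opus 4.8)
The plan is to insert the definition \eqref{one} of $C_{\mathcal{J}}(\mathcal{I})$ into the left-hand side of \eqref{two} and interchange the order of summation. Since $\mathcal{O}_{\mathbb{K}}$ is a Dedekind domain, for $\Re(s)>1$ the Dedekind zeta function $\zeta_{\mathbb{K}}(s)=\sum_{\mathcal{K}}\mathcal{N}(\mathcal{K})^{-s}$ converges absolutely, and the inner sum defining $C_{\mathcal{J}}(\mathcal{I})$ ranges only over the finitely many common divisors of $\mathcal{I}$ and $\mathcal{J}$. Bounding $|\mu|\le 1$ shows the resulting double series is dominated by $\sum_{\mathcal{I}_1\mid\mathcal{I}}\mathcal{N}(\mathcal{I}_1)^{1-\sigma}\sum_{\mathcal{K}}\mathcal{N}(\mathcal{K})^{-\sigma}$ with $\sigma=\Re(s)$, which is finite for $\sigma>1$; this pins down the half-plane of validity and justifies the rearrangement.

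After swapping, I would first sum over the divisor $\mathcal{I}_1$, constrained by $\mathcal{I}_1\mid\mathcal{I}$ (a finite set independent of $\mathcal{J}$), and then over the ideals $\mathcal{J}$ with $\mathcal{I}_1\mid\mathcal{J}$. The key step is the change of variables $\mathcal{J}=\mathcal{I}_1\mathcal{K}$: because divisibility of integral ideals in a Dedekind domain is equivalent to the existence of a unique integral quotient ideal, the map $\mathcal{K}\mapsto\mathcal{I}_1\mathcal{K}$ is a bijection from the set of all nonzero integral ideals onto those $\mathcal{J}$ divisible by $\mathcal{I}_1$, with $\mathcal{J}/\mathcal{I}_1=\mathcal{K}$. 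Using the multiplicativity of the norm, $\mathcal{N}(\mathcal{J})=\mathcal{N}(\mathcal{I}_1)\mathcal{N}(\mathcal{K})$, the sum factors as
\[
\sum_{\mathcal{J}}\frac{C_{\mathcal{J}}(\mathcal{I})}{\mathcal{N}(\mathcal{J})^s}
=\sum_{\mathcal{I}_1\mid\mathcal{I}}\mathcal{N}(\mathcal{I}_1)^{1-s}\sum_{\mathcal{K}}\frac{\mu(\mathcal{K})}{\mathcal{N}(\mathcal{K})^s}.
\]

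To finish, I would invoke the number-field analogue of the Dirichlet series identity for the M\"obius function, namely $\sum_{\mathcal{K}}\mu(\mathcal{K})\mathcal{N}(\mathcal{K})^{-s}=1/\zeta_{\mathbb{K}}(s)$ for $\Re(s)>1$. This follows from the Euler product $\zeta_{\mathbb{K}}(s)=\prod_{\mathcal{P}}(1-\mathcal{N}(\mathcal{P})^{-s})^{-1}$ over prime ideals, together with the multiplicativity of $\mu$ and its values from the definition above, exactly as in the rational case. Recognizing the remaining outer sum as $\sigma_{\mathbb{K},(1-s)}(\mathcal{I})=\sum_{\mathcal{I}_1\mid\mathcal{I}}\mathcal{N}(\mathcal{I}_1)^{1-s}$ then yields \eqref{two}.

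I expect the computation itself to be routine; the only points requiring care are the absolute-convergence justification for the interchange and the clean statement of the quotient-ideal bijection $\mathcal{J}\leftrightarrow\mathcal{K}$, both of which rest on the Dedekind property of $\mathcal{O}_{\mathbb{K}}$. Neither presents a genuine obstacle, so the main value of writing this out is fixing the correct half-plane $\Re(s)>1$ and recording the factorization that will be reused later.
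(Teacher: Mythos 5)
Your proposal is correct and follows essentially the same route as the paper: insert the definition of $C_{\mathcal{J}}(\mathcal{I})$, interchange the order of summation via the substitution $\mathcal{J}=\mathcal{I}_1\mathcal{I}_2$, and conclude with the identity $\sum_{\mathcal{I}_2}\mu(\mathcal{I}_2)\mathcal{N}(\mathcal{I}_2)^{-s}=1/\zeta_{\mathbb{K}}(s)$. The only difference is that you spell out the absolute-convergence justification and the quotient-ideal bijection, which the paper leaves implicit.
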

\begin{proof}
From the definition of $C_{\mathcal{J}}({\mathcal{I}})$ in \eqref{one}, we have 
\begin{align*}
 \sum_{\mathcal{J}\subseteq \mathcal{O}_{\mathbb{K}}}\frac{C_{\mathcal{J}}({\mathcal{I}})}{\mathcal{N}(\mathcal{J})^s}&=  \sum_{\mathcal{J}\subseteq \mathcal{O}_{\mathbb{K}}}\frac{1}{\mathcal{N}(\mathcal{J})^s}\sum_{\substack{\mathcal{I}_1|\mathcal{J}\\\mathcal{I}_1|\mathcal{I}}}\mathcal{N}(\mathcal{I}_1)\mu(\frac{\mathcal{J}}{\mathcal{I}_1})=\sum_{\mathcal{I}_1|\mathcal{I}}\dfrac{1}{\mathcal{N}(\mathcal{I}_1)^{s-1}}\sum_{\mathcal{I}_2\subseteq \mathcal{O}_{\mathbb{K}}}\frac{\mu(\mathcal{I}_2)}{\mathcal{N}(\mathcal{I}_2)^s}\\
 &=\frac{\sigma_{\mathbb{K},(1-s)}(\mathcal{I})}{\zeta_{\mathbb{K}}(s)}.
\end{align*}
The above series is absolutely convergent for $\Re(s)>1$.
\end{proof} 
\begin{lem}\label{lemmatwo}
For $z\in \mathbb{C}$, and for a number field $\mathbb{K}$,
\[\sum_{\mathcal{I}\subseteq \mathcal{O}_{\mathbb{K}}}\frac{\sigma_{\mathbb{K},z}(\mathcal{I})}{\mathcal{N}(\mathcal{I})^s}=\zeta_{\mathbb{K}}(s)\zeta_{\mathbb{K}}(s-z),\numberthis\label{three}\]
for $\Re(s)>\max(1+\Re(z),1).$
\end{lem}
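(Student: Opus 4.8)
The plan is to recognize $\sigma_{\mathbb{K},z}$ as a Dirichlet convolution and then split the resulting Dirichlet series into a product of two Dedekind zeta functions. First I would rewrite the divisor sum defining $\sigma_{\mathbb{K},z}(\mathcal{I})=\sum_{\mathcal{I}_1\mid\mathcal{I}}\mathcal{N}(\mathcal{I}_1)^{z}$ as a sum over factorizations: since $\mathcal{O}_{\mathbb{K}}$ is a Dedekind domain, every non-zero integral ideal factors uniquely into prime ideals, so the condition $\mathcal{I}_1\mid\mathcal{I}$ is equivalent to the existence of a unique integral ideal $\mathcal{I}_2$ with $\mathcal{I}=\mathcal{I}_1\mathcal{I}_2$. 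Hence
\[\sigma_{\mathbb{K},z}(\mathcal{I})=\sum_{\mathcal{I}_1\mathcal{I}_2=\mathcal{I}}\mathcal{N}(\mathcal{I}_1)^{z},\]
where the sum runs over all ordered pairs of integral ideals whose product is $\mathcal{I}$.

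Next I would substitute this into the left-hand side of \eqref{three} and use the complete multiplicativity of the ideal norm, $\mathcal{N}(\mathcal{I}_1\mathcal{I}_2)=\mathcal{N}(\mathcal{I}_1)\mathcal{N}(\mathcal{I}_2)$, to collapse the sum over $\mathcal{I}$ together with the inner sum over factorizations into an unrestricted double sum over pairs $(\mathcal{I}_1,\mathcal{I}_2)$:
\[\sum_{\mathcal{I}\subseteq\mathcal{O}_{\mathbb{K}}}\frac{\sigma_{\mathbb{K},z}(\mathcal{I})}{\mathcal{N}(\mathcal{I})^{s}}=\sum_{\mathcal{I}_1\subseteq\mathcal{O}_{\mathbb{K}}}\sum_{\mathcal{I}_2\subseteq\mathcal{O}_{\mathbb{K}}}\frac{\mathcal{N}(\mathcal{I}_1)^{z}}{\mathcal{N}(\mathcal{I}_1)^{s}\,\mathcal{N}(\mathcal{I}_2)^{s}}.\]
Once the variables separate, the two factors are $\sum_{\mathcal{I}_1}\mathcal{N}(\mathcal{I}_1)^{-(s-z)}=\zeta_{\mathbb{K}}(s-z)$ and $\sum_{\mathcal{I}_2}\mathcal{N}(\mathcal{I}_2)^{-s}=\zeta_{\mathbb{K}}(s)$, which yields the claimed identity $\zeta_{\mathbb{K}}(s)\zeta_{\mathbb{K}}(s-z)$.

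The only point that needs care, and the step I would treat as the main obstacle, is justifying the interchange and factorization of the double sum, since this is valid only under absolute convergence. Here I would invoke the standard fact that the number of integral ideals of norm $n$ in $\mathcal{O}_{\mathbb{K}}$ is $\BigO{n^{\epsilon}}$, so that $\zeta_{\mathbb{K}}(s)=\sum_{\mathcal{I}}\mathcal{N}(\mathcal{I})^{-s}$ converges absolutely for $\Re(s)>1$ and $\zeta_{\mathbb{K}}(s-z)$ converges absolutely for $\Re(s-z)>1$, that is, for $\Re(s)>1+\Re(z)$. On the common half-plane $\Re(s)>\max(1+\Re(z),1)$ both single series converge absolutely, which simultaneously licenses the rearrangement of the double sum into the product of the two single sums and guarantees that the original series in \eqref{three} is itself absolutely convergent there. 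This completes the argument.
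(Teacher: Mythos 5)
Your proposal is correct and follows essentially the same route as the paper's proof, which simply expands $\sigma_{\mathbb{K},z}(\mathcal{I})=\sum_{\mathcal{I}_1\mid\mathcal{I}}\mathcal{N}(\mathcal{I}_1)^{z}$, recognizes the Dirichlet convolution, and factors the series as $\zeta_{\mathbb{K}}(s)\zeta_{\mathbb{K}}(s-z)$ in the stated half-plane of absolute convergence. The only difference is that you spell out the details the paper leaves implicit (unique factorization giving the pairing $\mathcal{I}=\mathcal{I}_1\mathcal{I}_2$, multiplicativity of the norm, and the $\BigO{n^{\epsilon}}$ ideal-counting bound justifying the rearrangement), which is a welcome but not substantively different elaboration.
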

\begin{proof}
Using the definition of $\sigma_{\mathbb{K},z}(\mathcal{I})$, we have
\begin{align*}
  \sum_{\mathcal{I}\subseteq \mathcal{O}_{\mathbb{K}}}\frac{\sigma_{\mathbb{K},z}(\mathcal{I})}{\mathcal{N}(\mathcal{I})^s}&=  \sum_{\mathcal{I}\subseteq \mathcal{O}_{\mathbb{K}}}\frac{1}{\mathcal{N}(\mathcal{I})^s}\sum_{\mathcal{I}_1|\mathcal{I}}\mathcal{N}(\mathcal{I}_1)^{z}\\
  &= \zeta_{\mathbb{K}}(s)\zeta_{\mathbb{K}}(s-z),
\end{align*}
and it is absolutely convergent in  $\Re(s)>\max(1+\Re(z),1).$
\end{proof}

\begin{lem}\label{lemmafive}
For $\Re(s)>\max(1,1+\Re(z_1),1+\Re(z_2),1+\Re(z_1+z_2))$, we have\[\sum_{\mathcal{I}\subseteq \mathcal{O}_{\mathbb{K}}}\frac{\sigma_{\mathbb{K},z_1}(\mathcal{I})\sigma_{\mathbb{K},z_2}(\mathcal{I})}{\mathcal{N}(\mathcal{I})^s}=\frac{\zeta_{\mathbb{K}}(s)\zeta_{\mathbb{K}}(s-z_1)\zeta_{\mathbb{K}}(s-z_2)\zeta_{\mathbb{K}}(s-z_1-z_2)}{\zeta_{\mathbb{K}}(2s-z_1-z_2)}.\numberthis\label{sigm,adirichlet}\]
\end{lem}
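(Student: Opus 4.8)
The plan is to exploit the multiplicativity of $\mathcal{I}\mapsto \sigma_{\mathbb{K},z_1}(\mathcal{I})\sigma_{\mathbb{K},z_2}(\mathcal{I})$ and reduce the claimed identity to a purely local computation at each prime ideal. Since $\mathcal{O}_{\mathbb{K}}$ is a Dedekind domain, every non-zero integral ideal factors uniquely into prime ideals, and $\sigma_{\mathbb{K},z}$ is multiplicative, being the divisor sum of the completely multiplicative function $\mathcal{I}_1\mapsto \mathcal{N}(\mathcal{I}_1)^{z}$. Hence the product $\sigma_{\mathbb{K},z_1}\sigma_{\mathbb{K},z_2}$ is multiplicative, and in the region of absolute convergence the Dirichlet series admits the Euler product
\[\sum_{\mathcal{I}\subseteq\mathcal{O}_{\mathbb{K}}}\frac{\sigma_{\mathbb{K},z_1}(\mathcal{I})\sigma_{\mathbb{K},z_2}(\mathcal{I})}{\mathcal{N}(\mathcal{I})^s}=\prod_{\mathcal{P}}\left(\sum_{k=0}^{\infty}\frac{\sigma_{\mathbb{K},z_1}(\mathcal{P}^k)\sigma_{\mathbb{K},z_2}(\mathcal{P}^k)}{\mathcal{N}(\mathcal{P})^{ks}}\right),\]
the product running over the prime ideals $\mathcal{P}$ of $\mathcal{O}_{\mathbb{K}}$.

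Next I would compute both sides locally. Fix a prime ideal $\mathcal{P}$ and set $p=\mathcal{N}(\mathcal{P})$, $u=p^{-s}$, $\alpha=p^{z_1}$, $\beta=p^{z_2}$. Since $\sigma_{\mathbb{K},z}(\mathcal{P}^k)=\sum_{i=0}^{k}p^{iz}$, the left-hand local factor is $L_{\mathcal{P}}:=\sum_{k=0}^{\infty}\left(\sum_{i=0}^{k}\alpha^{i}\right)\left(\sum_{j=0}^{k}\beta^{j}\right)u^{k}$. On the right, the Euler product of $\zeta_{\mathbb{K}}$ shows that the local factors of $\zeta_{\mathbb{K}}(s)$, $\zeta_{\mathbb{K}}(s-z_1)$, $\zeta_{\mathbb{K}}(s-z_2)$, $\zeta_{\mathbb{K}}(s-z_1-z_2)$ and of $\zeta_{\mathbb{K}}(2s-z_1-z_2)^{-1}$ are respectively $(1-u)^{-1}$, $(1-\alpha u)^{-1}$, $(1-\beta u)^{-1}$, $(1-\alpha\beta u)^{-1}$ and $(1-\alpha\beta u^{2})$, so the target local factor is
\[R_{\mathcal{P}}:=\frac{1-\alpha\beta u^{2}}{(1-u)(1-\alpha u)(1-\beta u)(1-\alpha\beta u)}.\]

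It then remains to prove the single algebraic identity $L_{\mathcal{P}}=R_{\mathcal{P}}$, which is the local heart of the matter. I would do this by interchanging the order of summation in $L_{\mathcal{P}}$, using $\sum_{k\ge \max(i,j)}u^{k}=u^{\max(i,j)}/(1-u)$, and then splitting the double sum over $i,j\ge 0$ according to whether $i\le j$ or $i>j$; each piece is a convergent double geometric series summing to an explicit rational function of $u,\alpha,\beta$, and recombining them and clearing denominators produces $R_{\mathcal{P}}$. (Equivalently, one checks that $(1-u)(1-\alpha u)(1-\beta u)(1-\alpha\beta u)L_{\mathcal{P}}$ collapses to the polynomial $1-\alpha\beta u^{2}$; the specialization $\alpha=\beta=1$, giving $\sum_{k\ge0}(k+1)^{2}u^{k}=(1+u)/(1-u)^{3}$, is a useful sanity check.) This local identity is precisely the prime-by-prime form of Ramanujan's classical formula for $\sum_n \sigma_a(n)\sigma_b(n)n^{-s}$, the only change being that the rational prime $p$ is replaced by $\mathcal{N}(\mathcal{P})$. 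Finally, all rearrangements and the Euler product are justified by absolute convergence: from $\sigma_{\mathbb{K},z}(\mathcal{I})\ll_{\epsilon}\mathcal{N}(\mathcal{I})^{\max(0,\Re(z))+\epsilon}$ one gets $\sigma_{\mathbb{K},z_1}(\mathcal{I})\sigma_{\mathbb{K},z_2}(\mathcal{I})\ll_{\epsilon}\mathcal{N}(\mathcal{I})^{\max(0,\Re(z_1))+\max(0,\Re(z_2))+\epsilon}$, and since $\max(0,\Re(z_1))+\max(0,\Re(z_2))=\max(0,\Re(z_1),\Re(z_2),\Re(z_1+z_2))$, the series converges exactly in the stated half-plane $\Re(s)>\max(1,1+\Re(z_1),1+\Re(z_2),1+\Re(z_1+z_2))$. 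The only genuine obstacle is the bookkeeping in the local identity $L_{\mathcal{P}}=R_{\mathcal{P}}$; everything else is standard multiplicative machinery.
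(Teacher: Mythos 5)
Your proposal is correct and takes essentially the same route as the paper: exploit multiplicativity of $\sigma_{\mathbb{K},z_1}\sigma_{\mathbb{K},z_2}$ to obtain an Euler product over prime ideals, reduce everything to a local identity at each $\mathcal{P}$, and verify that the local factor equals $\frac{1-\alpha\beta u^{2}}{(1-u)(1-\alpha u)(1-\beta u)(1-\alpha\beta u)}$. The only difference is computational bookkeeping: the paper evaluates the local sum by writing $\sigma_{\mathbb{K},z}(\mathcal{P}^k)$ in the closed form $\frac{\mathcal{N}(\mathcal{P})^{z(k+1)}-1}{\mathcal{N}(\mathcal{P})^{z}-1}$, expanding, and summing four geometric series in $k$, whereas you interchange the order of summation and split the double sum over $i,j$ according to $\max(i,j)$ --- both collapse to the same rational function.
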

\begin{proof}
Fix a prime ideal $\mathcal{P}$, then for a positive integer $k$, \[\sigma_{\mathbb{K},z}(\mathcal{P}^k)=\frac{\mathcal{N}\mathcal({P})^{z(k+1)}-1}{\mathcal{N}\mathcal({P})^{z}-1}.\] Both functions $\sigma_{\mathbb{K},z}(\mathcal{I})$, and $\sigma_{\mathbb{K},z_1}(\mathcal{I})\sigma_{\mathbb{K},z_2}(\mathcal{I})$ are multiplicative, and hence the infinite series has an Euler product representation given by 
\begin{align*}
 \sum_{\mathcal{I}\subseteq \mathcal{O}_{\mathbb{K}}}\frac{\sigma_{\mathbb{K},z_1}(\mathcal{I})\sigma_{\mathbb{K},z_2}(\mathcal{I})}{\mathcal{N}(\mathcal{I})^s}&= \prod_{\mathcal{P}\subseteq \mathcal{O}_{\mathbb{K} }}\left(1+\sum_{k=1}^{\infty} \frac{\sigma_{\mathbb{K},z_1}(\mathcal{P}^k)\sigma_{\mathbb{K},z_2}(\mathcal{P}^k)}{\mathcal{N}(\mathcal{P})^{ks}}\right) \\
 & = \prod_{\mathcal{P}\subseteq \mathcal{O}_{\mathbb{K} }}\left(1+\sum_{k=1}^{\infty}\frac{(\mathcal{N}\mathcal({P})^{z_1(k+1)}-1)(\mathcal{N}\mathcal({P})^{z_2(k+1)}-1)}{\mathcal{N}(\mathcal{P})^{ks}(\mathcal{N}\mathcal({P})^{z_1}-1)(\mathcal{N}\mathcal({P})^{z_2}-1)}\right).
\end{align*}
Let $\mathcal{N}(\mathcal{P})^{-s}=x$, $\mathcal{N}(\mathcal{P})^{z_1}=y$, and $\mathcal{N}(\mathcal{P})^{z_2}=z$, then \begin{align*}
    \sum_{\mathcal{I}\subseteq \mathcal{O}_{\mathbb{K}}}\frac{\sigma_{\mathbb{K},z_1}(\mathcal{I})\sigma_{\mathbb{K},z_2}(\mathcal{I})}{\mathcal{N}(\mathcal{I})^s}&= \prod_{\mathcal{P}\subseteq \mathcal{O}_{\mathbb{K} }}\left(\frac{1}{(y-1)(z-1)}\sum_{k=0}^{\infty}x^k(y^{k+1}-1)(z^{k+1}-1)\right)\\
    &= \prod_{\mathcal{P}\subseteq \mathcal{O}_{\mathbb{K} }}\left(\frac{1}{(y-1)(z-1)}\left\{\frac{yz}{1-xyz}-\frac{z}{1-xz}-\frac{y}{1-xy}+\frac{1}{1-x}\right\}\right)\\
    &=\prod_{\mathcal{P}\subseteq \mathcal{O}_{\mathbb{K} }}\frac{1-x^2yz}{(1-x)(1-xy)(1-xz)(1-xyz)}.
\end{align*}
%\textcolor{red}{Does the lemma follow directly/easy to see/if not, add one/two intermediate steps}
On substituting the values of x, y , and z in the above equation, we obtain Lemma \ref{lemmafive}.
\end{proof}
Next, we cite two lemmas which will be useful in the next section. The first one is a Brun-Titchmarsh theorem proved by Shiu \cite{shiu1980brun}. We will use it to estimate the partial sum
\[\sum_{\substack{\mathcal{I}\subseteq \mathcal{O}_{\mathbb{K}}\\\mathcal{N}(\mathcal{I})=n}}\sigma_{\mathbb{K},z_1}(\mathcal{I})\sigma_{\mathbb{K},z_2}(\mathcal{I}).\]
In \cite{shiu1980brun}, the author derives the theorem for a larger class $M$ of arithmetic function $f$ which are non-negative and multiplicative, and which satisfy the following conditions:
\begin{enumerate}
    \item For a prime $p$, and integer $l\ge 1$, there exists a positive constant $C_1$ such that \[f(p^l)\le C_1^l,\]
    \item For every $\epsilon>0$, and for $n\ge 1$, there exists a positive constant $C_2=C_2(\epsilon)$ such that \[f(n)\le C_2n^{\epsilon}.\]
\end{enumerate}
\begin{lem}\cite[Theorem 1]{shiu1980brun} \label{averagelemma}
Let $f\in M$, $0<\alpha, \beta<1/2$, and $a,k$ be integers. If $0<a<k$, and  $(a,k)=1$, then as $x\to \infty$
\[\sum_{\substack{x-y<n\le x\\n\equiv a\mod{q}}} f(n)\ll \frac{y}{\phi(q)\log x}\exp{\left(\sum_{\substack{p\le x\\p\not|q}}\frac{f(p)}{p}\right)},\] uniformly in $a,q$, and $y$ provided that $q\le y^{1-\alpha}$, and  $x^{\beta}<y\le x$.
\end{lem}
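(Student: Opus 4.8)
The plan is to prove this by the standard smooth/rough decomposition of each $n$ according to a threshold $z$, the idea being that the genuinely multiplicative content of $f$ lives on the $z$-smooth part, while the $z$-rough part contributes only a bounded factor together with a Brun--Titchmarsh count. I write the modulus as $q$ (the statement's $k$); since $(a,q)=1$, every $n$ counted is automatically coprime to $q$. Fix $z=x^{1/s}$, where $s=s(\beta)$ is a large fixed integer chosen so that $z\le y^{1/2}$ (possible since $y>x^{\beta}$ gives $x<y^{1/\beta}$, whence $z<y^{1/(s\beta)}$). For each $n$ write $n=bm$, where $b=\prod_{p^{v}\,\|\,n,\ p\le z}p^{v}$ is the $z$-smooth part and $m$ carries all prime factors exceeding $z$, so that $P^{-}(m)>z$ and $\gcd(b,m)=1$ (here $P^{+},P^{-}$ denote the largest and smallest prime factors). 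Multiplicativity of $f$ gives $f(n)=f(b)f(m)$, and both $b$ and $m$ are coprime to $q$.

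First I would control the rough part. If $m\le x$ has every prime factor $>z=x^{1/s}$, then $m>x^{\Omega(m)/s}$ forces $\Omega(m)\le s$; hence, by hypothesis (1), $f(m)\le C_{1}^{\Omega(m)}\le C_{1}^{s}=\BigO{1}$. Thus, after fixing the smooth part $b$, the inner sum over $m$ is bounded by a constant times the number of $z$-rough integers $m$ lying in the interval $((x-y)/b,\,x/b]$ and in the residue class $ab^{-1}\pmod q$ (using $(b,q)=1$). The key analytic input is a Brun--Titchmarsh / fundamental-lemma estimate for sifted integers in a progression: uniformly,
\[
\#\Big\{m\in\big(\tfrac{x-y}{b},\tfrac{x}{b}\big]:\ m\equiv ab^{-1}\!\!\pmod q,\ P^{-}(m)>z\Big\}\ \ll\ \frac{y}{b\,q}\prod_{\substack{p\le z\\ p\nmid q}}\Big(1-\tfrac1p\Big)\ \ll\ \frac{y}{b\,\phi(q)\log x},
\]
valid provided $b$ is not too large relative to the sieve's level of distribution; here I used $\prod_{p\le z,\,p\nmid q}(1-\tfrac1p)\ll \tfrac{q}{\phi(q)}\prod_{p\le z}(1-\tfrac1p)$, Mertens' theorem, and $\log z=\tfrac1s\log x\asymp\log x$.

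It remains to sum over the smooth parts. For $b$ below the admissible threshold this yields
\[
S\ \ll\ \frac{y}{\phi(q)\log x}\sum_{\substack{P^{+}(b)\le z\\ (b,q)=1}}\frac{f(b)}{b},
\]
and the Euler product, with hypothesis (2) ensuring that the prime-power tails $\sum_{v\ge 2}f(p^{v})p^{-v}$ converge, gives
\[
\sum_{\substack{P^{+}(b)\le z\\ (b,q)=1}}\frac{f(b)}{b}\ \le\ \prod_{\substack{p\le z\\ p\nmid q}}\Big(1+\frac{f(p)}{p}+\cdots\Big)\ \ll\ \exp\Big(\sum_{\substack{p\le z\\ p\nmid q}}\frac{f(p)}{p}\Big)\ \le\ \exp\Big(\sum_{\substack{p\le x\\ p\nmid q}}\frac{f(p)}{p}\Big),
\]
the final inequality because $f\ge0$ and $z\le x$. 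Combining the three displays produces exactly the claimed bound. The contribution of the large smooth parts $b$ (those exceeding the threshold, for which $((x-y)/b,x/b]$ is too short for the sieve) has to be removed separately, using hypothesis (2) as $f(b)\ll_{\epsilon}b^{\epsilon}$ together with a Rankin-type estimate exploiting that having an unusually large smooth divisor is a sparse event.

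The main obstacle is the uniform sieve step: one needs the fundamental-lemma/Brun--Titchmarsh bound for $z$-rough integers in a short interval inside an arithmetic progression to hold uniformly for $q$ as large as $y^{1-\alpha}$, which is precisely why the hypotheses $q\le y^{1-\alpha}$ and $x^{\beta}<y\le x$ appear. Reconciling this with the decomposition forces a careful joint choice of $s$ (hence $z$) and of the cutoff on $b$, so that simultaneously $z\le y^{1/2}$, the rough factor has at most $s$ prime divisors, and the sieve error terms remain dominated by the main term after summation over $b$; balancing these three constraints is the delicate part of the argument.
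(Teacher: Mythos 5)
First, a point of comparison: the paper contains no proof of this lemma at all --- it is quoted (typos and all) from Shiu [Theorem 1] as an external input, so the only meaningful benchmark is Shiu's original argument. Your outline does reproduce the skeleton of that argument: factor each $n$ into its $z$-smooth part $b$ and $z$-rough part $m$ with $z=x^{1/s}$, note $\Omega(m)\le s$ so that $f(m)\ll_{s}1$ by hypothesis (1), sieve the rough cofactor in the progression, and sum $f(b)/b$ over smooth $b$ via the Euler product (hypothesis (2) taming the prime-power tails) to produce the factor $\exp\big(\sum_{p\le x,\,p\not|q}f(p)/p\big)$. As a plan, this is the right one.

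As a proof, however, it has two genuine gaps. First, your parameter choice is incorrect: you fix $s=s(\beta)$ so that merely $z\le y^{1/2}$, but the fundamental-lemma/Brun--Titchmarsh step you invoke requires the sifting limit $z$ to be a small power of the sieve \emph{level} $y/(bq)$, and since $q$ may be as large as $y^{1-\alpha}$ this level can be as small as $y^{\alpha}/b$. With $q$ near $y^{1-\alpha}$ and $b$ near your cutoff, the condition $z\le y^{1/2}$ gives no sieve bound at all; $s$ must depend on $\alpha$ as well as $\beta$ (which is exactly why both parameters appear in the hypotheses), and the cutoff on $b$ must be coupled to that choice --- the one step you flag as ``delicate'' is resolved by a choice you have already made wrongly. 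Second, the large-smooth-part contribution, which you defer to ``a Rankin-type estimate,'' is not a routine tail: counting multiples of $b$ in a short interval inside a progression gives $y/(bq)+\BigO{1}$, and the $\BigO{1}$ terms accumulate fatally when summed over smooth $b$ through the natural range, precisely because the interval is short and the modulus large. Shiu disposes of this by a separate case analysis on the largest prime factor of $n$ --- this is where $y>x^{\beta}$ enters quantitatively --- and it occupies the bulk of his paper. What you have written is a correct map of Shiu's proof in which the two hardest steps, the sieve estimate uniform in $q\le y^{1-\alpha}$ and the removal of large smooth parts, are asserted rather than proved.
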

The second lemma is a Perron-type formula for a sequence of complex numbers.
\begin{lem}\cite[Lemma 2.8]{MR3600410}\label{parronlemma}
Let $0<\lambda_1<\lambda_2<\cdots<\lambda_n\to \infty$ be any sequence of real numbers, and let $\{a_n\}$ be a sequence of complex numbers. Let the Dirichlet series $g(s):=\sum_{n=1}^{\infty}a_n\lambda_{n}^{-s}$ be absolutely convergent for $\sigma_a$. If $\sigma_0>\max(0,\sigma_a)$ and $x>0$, then \[\sum_{\lambda_n\le x}a_n=\frac{1}{2\pi i}\int_{\sigma_0-iT}^{\sigma_0+iT}g(s)\frac{x^s}{s}ds+R,\] where \[R\ll \sum_{\substack{x/2<\lambda_n<2x\\n\ne x}}|a_n|\min\left(1,\frac{x}{T|x-\lambda_n|}\right)+\frac{4^{\sigma_0}+x^{\sigma_0}}{T}\sum_{n=1}^{\infty}\frac{|a_n|}{\lambda_n^{\sigma_0}}.\] 
\end{lem}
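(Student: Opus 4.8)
The plan is to reduce the statement to the classical scalar ``discontinuous integral'' of Perron and then sum over $n$. Since $g(s)=\sum_n a_n\lambda_n^{-s}$ converges absolutely on the line $\Re(s)=\sigma_0>\max(0,\sigma_a)$, the tail $\sum_n|a_n|\lambda_n^{-\sigma_0}$ is finite, and the contour is a segment of finite length; hence Fubini justifies interchanging summation and integration to obtain
\[\frac{1}{2\pi i}\int_{\sigma_0-iT}^{\sigma_0+iT}g(s)\frac{x^s}{s}\,ds=\sum_{n=1}^{\infty}a_n\,B\!\left(\frac{x}{\lambda_n},T\right),\qquad B(y,T):=\frac{1}{2\pi i}\int_{\sigma_0-iT}^{\sigma_0+iT}y^s\,\frac{ds}{s}.\]
Writing $\delta(y)=1$ for $y>1$ and $\delta(y)=0$ for $y<1$, the entire problem becomes the control of the kernel $B(y,T)-\delta(y)$, because $\delta(x/\lambda_n)=1$ exactly when $\lambda_n<x$, matching the indicator $\mathbf 1[\lambda_n\le x]$ up to the boundary $\lambda_n=x$. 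Thus $R=\sum_{\lambda_n\ne x}a_n\big(B(x/\lambda_n,T)-\delta(x/\lambda_n)\big)$ together with a contribution from the terms $\lambda_n=x$.

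The heart of the matter is the kernel estimate: for $y\ne1$,
\[\left|B(y,T)-\delta(y)\right|\le y^{\sigma_0}\min\!\left(1,\frac{1}{T\,|\log y|}\right),\qquad \left|B(1,T)-\tfrac12\right|\le\frac{\sigma_0}{T}.\]
The factor $y^{\sigma_0}/(T|\log y|)$ comes from the usual contour shift: for $y>1$ I close the rectangle to the left, picking up the simple pole at $s=0$ of residue $1$, and for $y<1$ I close to the right with no pole; in both cases the two horizontal segments at height $\pm T$ contribute $\ll y^{\sigma_0}/(T|\log y|)$ while the far vertical side vanishes in the limit. The value at $y=1$ is computed exactly, $B(1,T)=\tfrac1\pi\arctan(T/\sigma_0)$, which gives the stated bound there. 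The delicate ingredient—and the step I expect to be the \emph{main obstacle}—is the uniform bound $|B(y,T)-\delta(y)|\le y^{\sigma_0}$, i.e.\ the ``$1$'' inside the minimum, since the contour-shift estimate diverges precisely as $y\to1$ where $|\log y|\to0$. Near $y=1$ it must be replaced by a direct argument: one compares $B(y,T)$ with $B(1,T)$ along the vertical segment and bounds the difference by an absolutely convergent integral (equivalently, deforms onto a contour that skirts the pole at a bounded distance), showing that $B(y,T)$ stays within a bounded multiple of $y^{\sigma_0}$ of $\delta(y)$ uniformly in $y$.

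It remains to sum over $n$, splitting the range at $x/2<\lambda_n<2x$. For these near terms $y=x/\lambda_n\in(1/2,2)$, so $y^{\sigma_0}\ll1$, and by the mean value theorem $|\log(x/\lambda_n)|\asymp|x-\lambda_n|/x$; the kernel is therefore $\ll\min\!\left(1,\tfrac{x}{T|x-\lambda_n|}\right)$, which produces the first sum in $R$. For the remaining far terms $|\log(x/\lambda_n)|\ge\log2$, so the minimum is $\ll1/T$, while $y^{\sigma_0}=x^{\sigma_0}\lambda_n^{-\sigma_0}$; summing yields $\ll\frac{x^{\sigma_0}}{T}\sum_n|a_n|\lambda_n^{-\sigma_0}$, and a uniform treatment of small $x$ and of the edges of the window $(x/2,2x)$ replaces $x^{\sigma_0}$ by $4^{\sigma_0}+x^{\sigma_0}$. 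The boundary terms $\lambda_n=x$ each contribute only $O(|a_n|)$ and are absorbed into the first sum (or are simply absent under the convention that $x\notin\{\lambda_n\}$). Finally, it is worth stressing that nothing in the argument uses integrality of the frequencies: the passage from $n^{-s}$ to $\lambda_n^{-s}$ is purely cosmetic, requiring only $\lambda_n\to\infty$ and the absolute convergence of $g$ at $\sigma_0$.
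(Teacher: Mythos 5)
The paper offers no proof of this lemma at all: it is imported verbatim from \cite{MR3600410}, where it is the generalized-frequency analogue of the classical truncated Perron formula (Montgomery--Vaughan, \emph{Multiplicative Number Theory I}, Theorem 5.2), so there is nothing internal to compare against. Your argument is exactly the standard proof of that result and is correct in outline: Fubini on the finite segment using absolute convergence at $\sigma_0$; the kernel bound $|B(y,T)-\delta(y)|\le y^{\sigma_0}\min\left(1,\frac{1}{\pi T|\log y|}\right)$, with the ``$1$'' supplied near $y=1$ by the circular-arc deformation (or by comparison with $B(1,T)=\frac{1}{\pi}\arctan(T/\sigma_0)$, which you compute correctly); and the split at $x/2<\lambda_n<2x$ with $|\log(x/\lambda_n)|\asymp|x-\lambda_n|/x$. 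You are also right that integrality of the frequencies plays no role. Two points would need care in a full write-up. First, your near-term step ``$y^{\sigma_0}\ll1$'' is uniform only if implied constants may depend on $\sigma_0$, since $y^{\sigma_0}\le 2^{\sigma_0}$ on the window; recovering the stated absolute form, with the excess pushed into the $\frac{4^{\sigma_0}+x^{\sigma_0}}{T}$ term, requires the extra bookkeeping (e.g.\ $\frac{y^{\sigma_0}-1}{\log y}\le\sigma_0 y^{\sigma_0}$ together with $\lambda_n^{-\sigma_0}\ge(2x)^{-\sigma_0}$ on the near range) that you wave at with ``a uniform treatment of the edges''; this is harmless for the present paper, which only uses $\sigma_0=1+O(1/\log x)$. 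Second, as transcribed the first sum in $R$ \emph{excludes} $\lambda_n=x$, while the left-hand side counts such a term with full weight; since $B(1,T)$ is near $\tfrac12$, a frequency at $\lambda_n=x$ contributes an error $\approx|a_n|/2$ that is not majorized by $R$ as written, so such a term cannot be ``absorbed into the first sum'' under the stated exclusion --- your alternative fix (the convention $x\notin\{\lambda_n\}$, or equivalently the half-weight convention $\sum'$ used by Montgomery--Vaughan) is the correct one, and you were right to flag it.
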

For a quadratic number field $\mathbb{K}$ with discriminant q,  \[\zeta_{\mathbb{K}}(s)=\zeta(s)L(s,\chi_q),\numberthis\label{qseriesrepresentation}\] where $\zeta(s)$ is the Riemann zeta function, and $L(s,\chi_q)$ is the ordinary Dirichlet  L-series corresponding to  $\chi_q$ and $\chi_q$ is the Kronecker symbol of q.   We use bounds of $\zeta(s)$ \cite[Page 47]{MR882550} and $L(s,\chi)$ \cite{MR551704}, and derive the following bounds for $\zeta_{\mathbb{K}}(s)$: 
\begin{displaymath}
 \zeta_{\mathbb{K}}(\sigma+it) \ll_q \left\{
   \begin{array}{lr}
    {t^{1-2\sigma} \log^2 t},& \  -1\le \sigma \le 0,\\
     {t^{\frac{7- 10\sigma}{6}} \log^4 t}, &\  0\le \sigma \le 1/2,\\
      {t^{\frac{2-2\sigma}{3}} \log^4 t}, &\  1/2\le \sigma \le 1,\\
      { \log^2 t},  &\ 1\le\sigma \le 2,\\
       {1} , &\ \sigma \ge 2,
     \end{array}\numberthis\label{bound}
    \right.
\end{displaymath}
The next lemma gives an expression for the Dedekind zeta function for a cubic number field with discriminant $D=dq^2$ ($d$ squarefree).
\begin{lem}\label{cubic}\cite[Lemma 1]{muller1988distribution}
Let $\mathbb{K}$ be a cubic number field and $D=dq^2$ ($d$ squarefree) its discriminant; then 
\begin{enumerate}
    \item $\mathbb{K}$ is normal extension if and only if $D=q^2$. In this case \[\zeta_{\mathbb{K}}(s)=\zeta(s)L(s,\chi_1)\overline{L(s,\chi_1)},\numberthis\label{cseriesrepresentation1}\]where $\zeta(s)$ is the Riemann zeta function and $L(s,\chi_1)$ is the ordinary Dirichlet series corresponding to the primitive character $\chi_1$ modulo q.
    \item If $\mathbb{K}$ is not a normal extension, then $d\ne 1$, and 
    \[\zeta_{\mathbb{K}}(s)=\zeta(s)L(s,\chi_2),\numberthis\label{cseriesrepresentation2}\] where $L(s,\chi_2)$ is the Dirichlet L-series over the quadratic number field $\mathbb{Q}(\sqrt{d})$: \[L(s,\chi_2)=\sum_{\mathcal{I}}\chi_{2}(\mathcal{I})\mathcal{N}(\mathcal{I})^{-s}.\]Here summation is taken over all ideals $\mathcal{I}\ne 0$ in  $\mathbb{Q}(\sqrt{d})$. 
\end{enumerate}
\end{lem}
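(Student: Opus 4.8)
The plan is to derive the factorization from the Artin formalism for Dedekind zeta functions, reading off each case from the representation theory of the Galois group of the Galois closure. Let $\tilde{\mathbb{K}}$ denote the Galois closure of $\mathbb{K}/\mathbb{Q}$, set $G=\operatorname{Gal}(\tilde{\mathbb{K}}/\mathbb{Q})$ and $H=\operatorname{Gal}(\tilde{\mathbb{K}}/\mathbb{K})$. Since $[\mathbb{K}:\mathbb{Q}]=3$, the action of $G$ on the three embeddings of $\mathbb{K}$ gives a faithful embedding $G\hookrightarrow S_3$, so either $G\cong C_3$ (whence $H=1$ and $\mathbb{K}=\tilde{\mathbb{K}}$ is normal) or $G\cong S_3$ (whence $|H|=2$ and $\mathbb{K}$ is non-normal). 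The first reduction I would record is the standard discriminant criterion $d=1\iff D$ is a perfect square $\iff G\subseteq A_3=C_3$, which identifies the normal case precisely with $D=q^2$ and so justifies the dichotomy stated in the lemma, as well as the assertion $d\neq 1$ in the non-normal case.

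The core input is Artin's induction invariance of $L$-functions, which gives $\zeta_{\mathbb{K}}(s)=L(s,\mathbf{1}_H;\tilde{\mathbb{K}}/\mathbb{K})=L(s,\operatorname{Ind}_H^G\mathbf{1}_H;\tilde{\mathbb{K}}/\mathbb{Q})$; decomposing the permutation representation $\operatorname{Ind}_H^G\mathbf{1}_H$ into irreducibles of $G$ then yields the claimed products. In the normal case $H=1$, so $\operatorname{Ind}_1^{C_3}\mathbf{1}$ is the regular representation $\mathbf{1}\oplus\chi\oplus\chi^2$ with $\chi$ a generator of $\widehat{C_3}$. Since $G$ is abelian, $\mathbb{K}\subseteq\mathbb{Q}(\zeta_q)$ by Kronecker--Weber, and $\chi$, $\chi^2=\bar\chi$ are realized as a primitive Dirichlet character $\chi_1$ modulo the conductor $q$ and its conjugate. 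This gives $\zeta_{\mathbb{K}}(s)=\zeta(s)L(s,\chi_1)\overline{L(s,\chi_1)}$, and the conductor--discriminant formula $D=\mathfrak{f}(\mathbf{1})\,\mathfrak{f}(\chi_1)\,\mathfrak{f}(\bar\chi_1)=q^2$ confirms that $q$ is exactly the modulus appearing in part (1).

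For the non-normal case I would use that the permutation representation of $S_3$ on the three cosets of an order-two subgroup splits as $\mathbf{1}\oplus\rho$, where $\rho$ is the two-dimensional irreducible representation; hence $\zeta_{\mathbb{K}}(s)=\zeta(s)L(s,\rho)$. To recognize $L(s,\rho)$ as an $L$-series over a quadratic field, let $N=C_3$ be the normal subgroup of $S_3$ and let $F$ be its fixed field, the unique quadratic subfield of $\tilde{\mathbb{K}}$; since $D=dq^2$ one has $F=\mathbb{Q}(\sqrt{D})=\mathbb{Q}(\sqrt{d})$. Choosing a nontrivial character $\psi$ of $\operatorname{Gal}(\tilde{\mathbb{K}}/F)\cong C_3$, the standard representation is $\rho=\operatorname{Ind}_N^{S_3}\psi$, so by induction invariance $L(s,\rho;\tilde{\mathbb{K}}/\mathbb{Q})=L(s,\psi;\tilde{\mathbb{K}}/F)$, and the right-hand side is precisely the Hecke (ideal) $L$-series $L(s,\chi_2)=\sum_{\mathcal{I}}\chi_2(\mathcal{I})\mathcal{N}(\mathcal{I})^{-s}$ over $\mathbb{Q}(\sqrt{d})$ attached to $\chi_2=\psi$.

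I expect the main obstacle to be the bookkeeping at the ramified primes and the precise determination of conductors: the induction invariance and the decomposition of the permutation representation are formal, but matching local factors at primes dividing $D$, and thereby pinning down that the Dirichlet character in part (1) has modulus exactly $q$ and that $\chi_2$ has the correct conductor in part (2), requires the conductor--discriminant formula and care with ramification. As an alternative and a useful check, I would verify the identity Euler factor by Euler factor at the unramified primes, matching the cubic splitting type of $p$ with the cycle type of its Frobenius class in $G\subseteq S_3$ and with the splitting of $p$ in $\mathbb{Q}(\sqrt{d})$; this renders the factorization transparent away from $D$ and reduces the entire argument to the finitely many ramified primes.
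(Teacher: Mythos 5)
The paper offers no proof of this lemma to compare against: it is quoted directly from M\"uller \cite[Lemma 1]{muller1988distribution}, which in turn rests on Hasse's classical class-field-theoretic analysis of cubic fields. Your blind derivation is correct, and it is in essence the standard argument behind that citation. The dichotomy $G\cong C_3$ versus $G\cong S_3$ from the faithful transitive action on the three embeddings, the square-discriminant criterion (valid because the field discriminant $D$ and the discriminant of any defining polynomial differ by a square, so $\sqrt{D}$ generates the fixed field of $G\cap A_3$, giving $d=1\iff G\subseteq A_3$ and, in the non-normal case, $F=\mathbb{Q}(\sqrt{D})=\mathbb{Q}(\sqrt{d})$), the decompositions $\operatorname{Ind}_H^G\mathbf{1}_H=\mathbf{1}\oplus\chi\oplus\bar{\chi}$ and $\mathbf{1}\oplus\rho$ with $\rho=\operatorname{Ind}_{C_3}^{S_3}\psi$, and the induction invariance of Artin $L$-functions are all sound. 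Two items you describe as bookkeeping are really the theorems doing the work, and you invoke the right ones: in case (1), passing from the Galois character to a primitive Dirichlet character of modulus exactly $q$ is Artin reciprocity (via Kronecker--Weber) together with the conductor--discriminant formula $D=\mathfrak{f}(\chi_1)\mathfrak{f}(\bar{\chi}_1)=q^2$; in case (2), identifying $L(s,\psi;\tilde{\mathbb{K}}/F)$ with the ideal sum $\sum_{\mathcal{I}}\chi_2(\mathcal{I})\mathcal{N}(\mathcal{I})^{-s}$ is Artin reciprocity for the cyclic cubic extension $\tilde{\mathbb{K}}/F$, with $\chi_2$ the corresponding ray class character extended by zero on ideals not coprime to its conductor --- which is exactly the convention implicit in the lemma's sum over all nonzero ideals of $\mathbb{Q}(\sqrt{d})$. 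Since the lemma as stated makes no claim about the conductor of $\chi_2$, your anticipated ramified-prime analysis in case (2) is not actually needed, and your unramified Euler-factor comparison is a sanity check subsumed by the induction formalism rather than a required step.
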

Using the above lemma, we arrive at the following bounds in the cubic case.
\begin{displaymath}
 \zeta_{\mathbb{K}}(\sigma+it) \ll_q \left\{
   \begin{array}{lr}
    {t^{3(1/2-\sigma)} \log^3 t},& \  -1\le \sigma \le 0,\\
    {t^{\frac{11- 17\sigma}{6}} \log^7 t}, &\  0\le \sigma \le 1/2,\\
    {t^{\frac{5(1-\sigma)}{6}} \log^7 t}, &\  1/2\le \sigma \le 1,\\
      { \log^3 t},  &\ 1\le\sigma \le 2,\\
       {1} , &\ \sigma \ge 2.
     \end{array}\numberthis\label{cbound}
    \right.
\end{displaymath}
\section{A key estimate} \label{sec2}
In this section, we shall compute the average of product of divisor functions in a number field, analogous to \cite[Theorem 1.5]{MR3600410} for divisor functions over rationals. 
\begin{lem}\label{lemmasix}
Let $\mathbb{K}$ be a number field. Then,
\[\sum_{\substack{0<\mathcal{N}(\mathcal{I})\le x\\\mathcal{I}\subseteq \mathcal{O}_{\mathbb{K}}}}\sigma_{\mathbb{K},z_1}(\mathcal{I})\sigma_{\mathbb{K},z_2}(\mathcal{I})=\mathrel{R}_{\mathbb{K}}+E_{\mathbb{K}}.\]
For a \textbf{quadratic number field $\mathbb{K}$}, and for $-1/3<a_1<0$, $-2/9<a_2<0$, and $-2/9<a_1+a_2<0$, 
\begin{align*}
  \mathrel{R}_{\mathbb{K}}= &\rho_{\mathbb{K}} \frac{\zeta_{\mathbb{K}}(1-z_1)\zeta_{\mathbb{K}}(1-z_2)\zeta_{\mathbb{K}}(1-z_1-z_2)}{\zeta_{\mathbb{K}}(2-z_1-z_2)}{x}+\rho_{\mathbb{K}}\frac{\zeta_{\mathbb{K}}(1+z_1)\zeta_{\mathbb{K}}(1+z_1-z_2)\zeta_{\mathbb{K}}(1-z_2)}{\zeta_{\mathbb{K}}(2+z_1-z_2)}\frac{x^{1+z_1}}{1+z_1}\\&+\rho_{\mathbb{K}}\frac{\zeta_{\mathbb{K}}(1+z_2)\zeta_{\mathbb{K}}(1+z_2-z_1)\zeta_{\mathbb{K}}(1-z_1)}{\zeta_{\mathbb{K}}(2-z_1+z_2)}\frac{x^{1+z_2}}{1+z_2}\\&+\rho_{\mathbb{K}} \frac{\zeta_{\mathbb{K}}(1+z_1+z_2)\zeta_{\mathbb{K}}(1+z_2)\zeta_{\mathbb{K}}(1+z_1)}{\zeta_{\mathbb{K}}(2+z_1+z_2)}\frac{x^{1+z_1+z_2}}{1+z_1+z_2},\end{align*}and
  \[E_{\mathbb{K}}=\BigOq{x^{\frac{10+6a_1+3a_2}{12}}\log^{18}x}.\]
  For \textbf{a cubic number field $\mathbb{K}$}, and for
  $-3/16<a_1<0$, $-3/16<a_2<0$, and $-3/16<a_1+a_2<0$,
  $\mathrel{R}_{\mathbb{K}}$ is same as above, and
  %\begin{align*}
 % \mathrel{R}_0&=  \rho_\mathbb{K} \frac{\zeta_{\mathbb{K}}(1-z_1)\zeta_{\mathbb{K}}(1-z_2)\zeta_{\mathbb{K}}(1-z_1-z_2)}{\zeta_{\mathbb{K}}(2-z_1-z_2)}{x}+\rho_\mathbb{K}\frac{\zeta_{\mathbb{K}}(1+z_1)\zeta_{\mathbb{K}}(1+z_1-z_2)\zeta_{\mathbb{K}}(1-z_2)}{\zeta_{\mathbb{K}}(2+z_1-z_2)}\frac{x^{1+z_1}}{1+z_1}\\&+\rho_\mathbb{K}\frac{\zeta_{\mathbb{K}}(1+z_2)\zeta_{\mathbb{K}}(1+z_2-z_1)\zeta_{\mathbb{K}}(1-z_1)}{\zeta_{\mathbb{K}}(2-z_1+z_2)}\frac{x^{1+z_2}}{1+z_2}+\rho_\mathbb{K} \frac{\zeta_{\mathbb{K}}(1+z_1+z_2)\zeta_{\mathbb{K}}(1+z_2)\zeta_{\mathbb{K}}(1+z_1)}{\zeta_{\mathbb{K}}(2+z_1+z_2)}\frac{x^{1+z_1+z_2}}{1+z_1+z_2}.\end{align*}, 
  \[E_{\mathbb{K}}=\BigOq{x^{\frac{13}{16}}\log^{31}x}.\]
\end{lem}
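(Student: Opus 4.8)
The plan is to read off the main term $R_{\mathbb{K}}$ as a sum of residues and to bound $E_{\mathbb{K}}$ by a contour argument, following the rational case of \cite{MR3600410}. Write $a_n := \sum_{\mathcal{N}(\mathcal{I}) = n} \sigma_{\mathbb{K}, z_1}(\mathcal{I})\sigma_{\mathbb{K}, z_2}(\mathcal{I})$ and $\lambda_n := n$, so that by Lemma \ref{lemmafive} the associated Dirichlet series
\[
F(s) := \sum_{n \ge 1} \frac{a_n}{n^s} = \frac{\zeta_{\mathbb{K}}(s)\zeta_{\mathbb{K}}(s - z_1)\zeta_{\mathbb{K}}(s - z_2)\zeta_{\mathbb{K}}(s - z_1 - z_2)}{\zeta_{\mathbb{K}}(2s - z_1 - z_2)}
\]
is absolutely convergent for $\Re(s) > 1$ (the abscissa is $\max(1, 1+a_1, 1+a_2, 1+a_1+a_2) = 1$ since $a_1, a_2 < 0$). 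First I would apply the Perron formula of Lemma \ref{parronlemma} with $\sigma_0 = 1 + 1/\log x$ and truncation height $T$, writing the partial sum as $\frac{1}{2\pi i}\int_{\sigma_0 - iT}^{\sigma_0 + iT} F(s)\frac{x^s}{s}\,ds + R$.

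Next I would move the line of integration to $\Re(s) = b$ lying to the left of all four poles of $F$, which are simple and located at $s = 1,\, 1 + z_1,\, 1 + z_2,\, 1 + z_1 + z_2$, each arising from one numerator factor since $\zeta_{\mathbb{K}}$ has a simple pole at $1$ with residue $\rho_{\mathbb{K}}$. The residue theorem produces the four residues, and substituting $s = 1, 1+z_1, 1+z_2, 1+z_1+z_2$ into the remaining factors and into $x^s/s$ gives precisely the four terms of $R_{\mathbb{K}}$; this part is mechanical. The constraints $a_1 + a_2 > -c$ (with $c = 2/9$ quadratic, $3/16$ cubic) are exactly what allow $b$ to be placed to the left of the leftmost pole $1 + z_1 + z_2$ while keeping $\Re(2s - z_1 - z_2) = 2b - a_1 - a_2 > 1$, so that the denominator stays in its region of absolute convergence and $1/\zeta_{\mathbb{K}}(2s - z_1 - z_2) = \BigOq{1}$ there. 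For the cubic case the admissible region $a_1 + a_2 > -3/16$ is precisely what lets one fix the line at $\Re(s) = 13/16$ uniformly left of the poles, which is why the cubic exponent carries no $a_1, a_2$ dependence.

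The substance of the argument is the estimate of $E_{\mathbb{K}}$, which collects the integral over $\Re(s) = b$, the two horizontal connectors at height $\pm T$, and the Perron remainder $R$. On the vertical line each of the four numerator factors is estimated by the subconvexity bounds \eqref{bound} (quadratic) or \eqref{cbound} (cubic); since the factors are evaluated at $\Re(s) = b,\, b - a_1,\, b - a_2,\, b - a_1 - a_2$, the precise exponent of $t$ depends on which of the strips $[0,\tfrac12]$, $[\tfrac12,1]$, $[1,2]$ each argument falls into, and it is this case analysis that forces the asymmetric ranges of $a_1, a_2$ and yields the power $x^{(10 + 6a_1 + 3a_2)/12}$ in the quadratic case and $x^{13/16}$ in the cubic case. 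For the Perron remainder I would bound $a_n$ on average via Shiu's theorem (Lemma \ref{averagelemma}): the map $n \mapsto a_n$ is non-negative and multiplicative (by unique factorization of ideals and multiplicativity of the norm) and satisfies the growth hypotheses of the class $M$, so both the near-diagonal sum $\sum_{x/2 < n < 2x} |a_n|\min(1, x/(T|x-n|))$ and the tail $\frac{4^{\sigma_0}+x^{\sigma_0}}{T} F(\sigma_0)$ are controlled, the logarithmic losses there accounting for the powers $\log^{18} x$ and $\log^{31} x$.

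The main obstacle I anticipate is the balancing and contour bookkeeping in this last step. One must choose $b$ and $T$ so that the growth $x^b T^{P(b)}$ on the shifted line, where $P(b) > 0$ is the total $t$-exponent produced by the four subconvexity bounds, is balanced against the Perron remainder of size $\asymp x^{1+\epsilon}/T$, all while keeping $b$ to the left of the leftmost pole and inside the strip where the quoted bounds apply. Tracking how the four arguments $b - \{0, a_1, a_2, a_1 + a_2\}$ migrate between the subconvexity strips as $b$ varies is what produces the stated exponents and the asymmetry between $a_1$ and $a_2$, and the final check is to verify that the resulting $E_{\mathbb{K}}$ is genuinely of smaller order than the leading term $\rho_{\mathbb{K}}\,\zeta_{\mathbb{K}}(1-z_1)\zeta_{\mathbb{K}}(1-z_2)\zeta_{\mathbb{K}}(1-z_1-z_2)\zeta_{\mathbb{K}}(2-z_1-z_2)^{-1}x$ throughout the admissible region.
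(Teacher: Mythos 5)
Your skeleton---Perron's formula (Lemma \ref{parronlemma}) at $\sigma_0=1+1/\log x$, a contour shift collecting the four simple poles at $s=1,\,1+z_1,\,1+z_2,\,1+z_1+z_2$ to produce $R_{\mathbb{K}}$, Shiu's theorem (Lemma \ref{averagelemma}) for the Perron remainder, and the bounds \eqref{bound}, \eqref{cbound} on the shifted contour---is exactly the paper's. The genuine gap is the placement of the shifted line, which is where the stated exponents actually come from. You propose to put the new abscissa $b$ just to the left of the leftmost pole, explicitly $b=13/16$ in the cubic case, and you claim the hypotheses $a_1+a_2>-3/16$ (resp.\ $-2/9$) exist to make this placement possible while keeping $\Re(2s-z_1-z_2)>1$. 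Neither claim holds. Contour feasibility plus absolute convergence of the denominator only needs $a_1+a_2>-1$. More seriously, with $b=13/16$ all four numerator arguments $b,\,b-a_1,\,b-a_2,\,b-a_1-a_2$ lie in $[1/2,1]$, so \eqref{cbound} gives a vertical integrand carrying
\[
t^{P},\qquad P=\frac{5}{6}\left(4(1-b)+2(a_1+a_2)\right)=\frac{5}{8}+\frac{5}{3}(a_1+a_2)>\frac{5}{16},
\]
and the vertical segment contributes $\asymp x^{b}T^{P}$ up to logarithms. Since the Perron remainder is $\gg x/T$, one must take $T\ge x^{3/16}$, and then the bound your method yields is at least $x^{13/16+(3/16)(5/16)}$, strictly larger than the claimed $E_{\mathbb{K}}=\operatorname{O}_q\left(x^{13/16}\log^{31}x\right)$. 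The same defect occurs in the quadratic case: a line hugging the poles makes $x^{b}$ too large relative to what the zeta bounds can absorb.

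The fix is the paper's choice: shift all the way down to $\lambda=(1+a_1+a_2)/2$ (roughly $\Re(s)\approx 1/2$, far left of the poles), take $T=x^{1/4}$ for quadratic and $T=x^{3/16}$ for cubic; the vertical line then costs $T^{(4-3a_2)/3}x^{\lambda}\log^{18}T$ resp.\ $T^{(5-6a_2)/3}x^{\lambda}\log^{31}T$, i.e.\ $x^{(10+6a_1+3a_2)/12}$ resp.\ $x^{(13+8a_1+2a_2)/16}\le x^{13/16}$, matching the statement. The hypotheses on $a_1,a_2$ are not about contour placement at all: they guarantee that $E_{\mathbb{K}}$ stays below the smallest main term $x^{1+a_1+a_2}$ (in the cubic case this is literally $1+a_1+a_2>13/16$, and in the quadratic case the constraints give $6a_1+9a_2>-2$, equivalently $(10+6a_1+3a_2)/12<1+a_1+a_2$)---the ``final check'' you defer is in fact what determines the admissible region. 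Two smaller points. First, Shiu's theorem requires a non-negative multiplicative function, and $A(n,z_1,z_2)$ is complex for complex $z_i$; the paper therefore runs Lemma \ref{averagelemma} on the explicit divisor-type majorant \eqref{quadraticinequality}, not on $a_n$ itself. Second, note that the paper's $\lambda$ makes $\Re(2s-z_1-z_2)=1$ exactly, so it implicitly also needs a lower bound for $\zeta_{\mathbb{K}}$ on the $1$-line; your requirement $\Re(2s-z_1-z_2)>1$ avoids that, and indeed any $b$ slightly to the right of $(1+a_1+a_2)/2$ can be made to work---but not $b=13/16$.
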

%\begin{lem}\label{lemmasix}
%Let $-1/3<\Re(z_1)<0$, $-2/9<\Re(z_2)<0$, and $-2/9<\Re(z_1+z_2)<0$, then we have \[\sum_{\substack{0<\mathcal{N}(\mathcal{I})\le x\\\mathcal{I}\subseteq \mathcal{O}_{\mathbb{K}}}}\sigma_{\mathbb{K},z_1}(\mathcal{I})\sigma_{\mathbb{K},z_2}(\mathcal{I})=\mathrel{R}_0+\BigO{x^{\frac{10+6a_1+3a_2}{12}}\log^{18}x},\]where \begin{align*}
 % \mathrel{R}_0&=  \rho_{\mathbb{K}} \frac{\zeta_{\mathbb{K}}(1-z_1)\zeta_{\mathbb{K}}(1-z_2)\zeta_{\mathbb{K}}(1-z_1-z_2)}{\zeta_{\mathbb{K}}(2-z_1-z_2)}{x}+\rho_{\mathbb{K}}\frac{\zeta_{\mathbb{K}}(1+z_1)\zeta_{\mathbb{K}}(1+z_1-z_2)\zeta_{\mathbb{K}}(1-z_2)}{\zeta_{\mathbb{K}}(2+z_1-z_2)}\frac{x^{1+z_1}}{1+z_1}\\&+\rho_{\mathbb{K}}\frac{\zeta_{\mathbb{K}}(1+z_2)\zeta_{\mathbb{K}}(1+z_2-z_1)\zeta_{\mathbb{K}}(1-z_1)}{\zeta_{\mathbb{K}}(2-z_1+z_2)}\frac{x^{1+z_2}}{1+z_2}+\rho_{\mathbb{K}} \frac{\zeta_{\mathbb{K}}(1+z_1+z_2)\zeta_{\mathbb{K}}(1+z_2)\zeta_{\mathbb{K}}(1+z_1)}{\zeta_{\mathbb{K}}(2+z_1+z_2)}\frac{x^{1+z_1+z_2}}{1+z_1+z_2},\end{align*} where $a_1=\Re(z_1)$, and $a_2=\Re(z_2)$.
%\end{lem}
\begin{proof}
For any number field $\mathbb{K}$, one has \[\sum_{\mathcal{I}\subseteq \mathcal{O}_{\mathbb{K}}}\frac{\sigma_{\mathbb{K},z_1}(\mathcal{I})\sigma_{\mathbb{K},z_2}(\mathcal{I})}{\mathcal{N}(\mathcal{I})^s}=\sum_{n=1}^{\infty}\frac{1}{n^s}\sum_{\substack{\mathcal{I}\subseteq \mathcal{O}_{\mathbb{K}}\\\mathcal{N}(\mathcal{I})=n}}\sigma_{\mathbb{K},z_1}(\mathcal{I})\sigma_{\mathbb{K},z_2}(\mathcal{I}).\] 
Define $A(n,z_1,z_2):=\sum_{\substack{\mathcal{I}\subseteq \mathcal{O}_{\mathbb{K}}\\\mathcal{N}(\mathcal{I})=n}}\sigma_{\mathbb{K},z_1}(\mathcal{I})\sigma_{\mathbb{K},z_2}(\mathcal{I})$ and, \[f(z_1,z_2,s):=\dfrac{\zeta_{\mathbb{K}}(s)\zeta_{\mathbb{K}}(s-z_1)\zeta_{\mathbb{K}}(s-z_2)\zeta_{\mathbb{K}}(s-z_1-z_2)}{\zeta_{\mathbb{K}}(2s-z_1-z_2)}.\]
Let $\Re(z_1)=a_1$ and $\Re(z_2)=a_2$ be such that $a_1,a_2<0$, and $a_1+a_2>-1$. Consider $\alpha=1+\dfrac{1}{\log x}$. Using Lemma \ref{parronlemma}, we have \begin{align*}
    \sum_{\substack{0<\mathcal{N}(\mathcal{I})\le x\\\mathcal{I}\subseteq \mathcal{O}_{\mathbb{K}}}}\sigma_{\mathbb{K},z_1}(\mathcal{I})\sigma_{\mathbb{K},z_2}(\mathcal{I})=\sum_{n\le x}A(n,z_1,z_2)=&\frac{1}{2\pi i}\int_{\alpha-iT}^{\alpha+iT}f(z_1,z_2,s)\frac{x^s}{s}ds\\&+R(x;z_1,z_2),\numberthis\label{parron4}\end{align*} where \[R(x;z_1,z_2)\ll \sum_{x/2<n<2x}|A(n,z_1,z_2)|\min\left(1,\frac{x}{T|x-n|}\right)+\frac{x^{\alpha}}{T}\sum_{n=1}^{\infty}\frac{|A(n,z_1,z_2)|}{n^{\alpha}}.\numberthis\label{errordoublesum}\]
 For $\lambda=(1+a_1+a_2)/2$, we solve integral in \eqref{parron4} by modifying the line integral into a rectangular path $\mathrel{C}$ with vertices $\alpha\pm iT$, and $\lambda\pm iT$. The poles of the integrand inside the contour $\mathrel{C}$ are $s_1=1$, $s_2=z_1+1$, $s_3=z_2+1$, and $s_4=z_1+z_2+1$. Hence, by Cauchy's residue theorem we have   
\begin{align*}
    \frac{1}{2\pi i}\int_{\mathrel{C}}f(z_1,z_2,s)\frac{x^s}{s}ds&=\rho_{\mathbb{K}} \frac{\zeta_{\mathbb{K}}(1-z_1)\zeta_{\mathbb{K}}(1-z_2)\zeta_{\mathbb{K}}(1-z_1-z_2)}{\zeta_{\mathbb{K}}(2-z_1-z_2)}{x}\\&+\rho_{\mathbb{K}}\frac{\zeta_{\mathbb{K}}(1+z_1)\zeta_{\mathbb{K}}(1+z_1-z_2)\zeta_{\mathbb{K}}(1-z_2)}{\zeta_{\mathbb{K}}(2+z_1-z_2)}\frac{x^{1+z_1}}{1+z_1}\\&+\rho_{\mathbb{K}}\frac{\zeta_{\mathbb{K}}(1+z_2)\zeta_{\mathbb{K}}(1+z_2-z_1)\zeta_{\mathbb{K}}(1-z_1)}{\zeta_{\mathbb{K}}(2-z_1+z_2)}\frac{x^{1+z_2}}{1+z_2}\\&+\rho_{\mathbb{K}} \frac{\zeta_{\mathbb{K}}(1+z_1+z_2)\zeta_{\mathbb{K}}(1+z_2)\zeta_{\mathbb{K}}(1+z_1)}{\zeta_{\mathbb{K}}(2+z_1+z_2)}\frac{x^{1+z_1+z_2}}{1+z_1+z_2}.\numberthis\label{residue}\end{align*}
 This implies  
 \begin{align*}
     \frac{1}{2\pi i}\int_{\alpha-iT}^{\alpha+iT}f(z_1,z_2,s)\frac{x^s}{s}ds&= \mathrel{R_0}+\sum_{i=1}^{3}J_i,\numberthis\label{finalintegral}
 \end{align*}
 where $\mathrel{R_0}$ is equal to the right side of \eqref{residue}, and $J_i$'s are the line integrals along the lines $[\alpha+iT,\lambda+iT]$, $[\lambda-iT,\lambda+iT]$ and $[\alpha-iT,\lambda-iT]$  respectively. By Holder's inequality 
 \begin{align*}
   \left(\int_{\lambda}^{\alpha}\int_{T_0/2}^{T_0}f(z_1,z_2,\sigma+it)\frac{x^{\sigma+it}}{\sigma+it}d\sigma dt \right)^4&\ll  \int_{\lambda}^{\alpha}\int_{T_0/2}^{T_0}\frac{|\zeta_{\mathbb{K}}(\sigma+it)|^4x^{\sigma}}{|\zeta_{\mathbb{K}}(2(\sigma+it)-z_1-z_2)(\sigma+it)|}d\sigma dt\\&\times\int_{\lambda}^{\alpha}\int_{T_0/2}^{T_0}\frac{|\zeta_{\mathbb{K}}(\sigma+it-z_1)|^4x^{\sigma}}{|\zeta_{\mathbb{K}}(2(\sigma+it)-z_1-z_2)(\sigma+it)|}d\sigma dt\\ &\times \int_{\lambda}^{\alpha}\int_{T_0/2}^{T_0}\frac{|\zeta_{\mathbb{K}}(\sigma+it-z_2)|^4x^{\sigma}}{|\zeta_{\mathbb{K}}(2(\sigma+it)-z_1-z_2)(\sigma+it)|}d\sigma dt \\& \times \int_{\lambda}^{\alpha}\int_{T_0/2}^{T_0}\frac{|\zeta_{\mathbb{K}}(\sigma+it-z_1-z_2)|^4x^{\sigma}}{|\zeta_{\mathbb{K}}(2(\sigma+it)-z_1-z_2)(\sigma+it)|}d\sigma dt.\numberthis\label{holder}
 \end{align*}
 We estimate the above integrals, and the remainder $R(x;z_1,z_2)$ in \eqref{errordoublesum} separately for the quadratic and cubic number fields. 
 \subsection{Quadratic Number Field}
For $\mathbb{K}$ a quadratic number field, the Dirichlet series for $A(n,z_1,z_2)$ can be expressed in terms of $\zeta(s)$ and $L(s,\chi)$ using \eqref{sigm,adirichlet} and \eqref{qseriesrepresentation}. Consequently, $A(n,z_1,z_2)$ is written as a Dirichlet convolution of coefficients of its Dirichlet series. This exercise yields the following elementary but essential bound. \begin{align*}
      |A(n,z_1,z_2)|&\le \left |n^{a_1+a_2}\sum_{\mathcal{N}(\mathcal{I})=n}\left(\sum_{\mathcal{I}_1|\mathcal{I}}1\right)^2\right|\\
      &\le \sum_{d|n}\left\{\sum_{d'|d}\left(\sigma_0(d')\sum_{d_1|d/d'}\sigma_0(d_1)\sigma_0(\frac{d}{d'd_1})\right)\right\}\sigma_0(n/d).
\numberthis\label{quadraticinequality}  \end{align*}
   Hence, \[|A(p,z_1,z_2)|\le 9.\]
  Taking $T=x^c$ where $c$ is a fixed real number, and dividing the interval $x/2<n<2x$ according to $\min\left(1,\dfrac{x}{T|x-n|}\right)$, we arrive at
  \begin{align*}
       \sum_{|x-n|<x^{1-c}}|A(n,z_1,z_2)|\min\left(1,\frac{x}{T|x-n|}\right)&=\sum_{|x-n|<x^{1-c}}|A(n,z_1,z_2)|\ll \frac{x}{T}\log^{8}T,\numberthis\label{errordouble}.
   \end{align*}
   The last estimate follows from an application of Lemma \ref{averagelemma} on the function $A(n,z_1,z_2)$. Note that \eqref{quadraticinequality} ensures that the hypothesis in the lemma is satisfied.
For the interval ${x+x^{1-c}<n<2x}$, we have \begin{align*}
  &\sum_{x+x^{1-c}<n<2x}|A(n,z_1,z_2)|\min\left(1,\frac{x}{T|x-n|}\right)\\&\ll\frac{x}{T}\sum_{x+x^{1-\alpha}<n<2x}\frac{\sum_{d|n}\left\{\sum_{d'|d}\left(\sigma_0(d')\sum_{d_1|d/d'}\sigma_0(d_1)\sigma_0(\frac{d}{d'd_1})\right)\right\}\sigma_0(n/d)}{n-x} \\&
     \ll \frac{x}{T}\sum_{l\ll \log x}\frac{1}{U}\sum_{\substack{U<n-x<2U\\ U=2^lx^{1-\alpha}}}\frac{\sum_{d|n}\left\{\sum_{d'|d}\left(\sigma_0(d')\sum_{d_1|d/d'}\sigma_0(d_1)\sigma_0(\frac{d}{d'd_1})\right)\right\}\sigma_0(n/d)}{n-x} \\&
    \ll \frac{x}{T}\log^{8}x.\numberthis\label{errordouble2}  
\end{align*}
One obtains similar bounds for $x/2<T<x-x^{1-c}$. Moreover, \[\frac{x^{\alpha}}{T}\sum_{n=1}^{\infty}\frac{|A(n,z_1,z_2)|}{n^{\alpha}}\ll \frac{x}{T}\log^8T\numberthis\label{errordouble3}\]
Finally, from \eqref{errordouble}, \eqref{errordouble2}, and \eqref{errordouble3}, we deduce that
\[R(x;z_1,z_2)\ll\frac{x}{T}\log^{8}T.\numberthis\label{errordouble4} \]
Next, we solve the line integrals using bounds in \eqref{bound}.
we have
 \begin{align*}
&\int_{\lambda}^{\alpha}\int_{T_0/2}^{T_0}\frac{|\zeta_{\mathbb{K}}(\sigma+it)|^4x^{\sigma}}{|\zeta_{\mathbb{K}}(2(\sigma+it)-z_1-z_2)(\sigma+it)|}d\sigma dt\\& \ll_q \int_{T_0/2}^{T_0} \int_{\lambda}^{1/2}t^{4(7-10\sigma)/6}\log^{18}t \frac{x^{\sigma}}{t}d\sigma dt+ \int_{T_0/2}^{T_0}\int_{1/2}^{\alpha}t^{8(1-\sigma)/3}\log^{18}t \frac{x^{\sigma}}{t}d\sigma dt\\&\ll_q\int_{T_0/2}^{T_0}t^{11/3} \log^{18}t\int_{\lambda}^{1/2}\left(\frac{x}{t^{20/3}}\right)^{\sigma}d\sigma dt+\int_{T_0/2}^{T_0}t^{5/3}\log^{18}t\int_{1/2}^{\alpha}\left(\frac{x}{t^{8/3}}\right)^{\sigma}d\sigma dt\\& \ll_q (T_0^{(8-20\lambda)/3}x^{\lambda}+{x})\log^{18}T_0.
 \end{align*}
 If $a_1-a_2>0$, then we have 
 \begin{align*}
     &\int_{\lambda}^{\alpha}\int_{T_0/2}^{T_0}\frac{|\zeta_{\mathbb{K}}(\sigma+it-z_1)|^4x^{\sigma}}{|\zeta_{\mathbb{K}}(2(\sigma+it)-z_1-z_2)(\sigma+it)|}d\sigma dt \\&\ll_q  \int_{T_0/2}^{T_0}\int_{\lambda}^{1/2+a_1}t^{4(7-10\sigma+10a_1)/6}\log^{18}t \frac{x^{\sigma}}{t}d\sigma dt+ \int_{T_0/2}^{T_0}\int_{1/2+a_1}^{1+a_1}t^{\frac{8(1-\sigma+a_1)}{3}}\log^
     {18}t\frac{x^{\sigma}}{t}d\sigma dt\\&+ \int_{T_0/2}^{T_0}\int_{1+a_1}^{\alpha}\log^{10}t \frac{x^{\sigma}}{t}d\sigma dt\\&= \int_{T_0/2}^{T_0}t^{(11+20a_1)/3} \log^{18}t\int_{\lambda}^{\frac{1}{2}+a_1}\left(\frac{x}{t^{20/3}}\right)^{\sigma}d\sigma dt+\int_{T_0/2}^{T_0}t^{\frac{(5+8a_1)}{3}}\log^{18}t\int_{\frac{1}{2}+a_1}^{1+a_1}\left(\frac{x}{t^{\frac{8}{3}}}\right)^{\sigma}d\sigma dt\\&+\int_{T_0/2}^{T_0}\int_{1+a_1}^{\alpha}\log^{10}t \frac{x^{\sigma}}{t}d\sigma dt\\& \ll_q (T_0^{\frac{8+20a_1-20\lambda}{3}}x^{\lambda}+{x^{1+a_1}})\log^{18}T_0+{x}\log^{10}T_0.
 \end{align*}
 Similarly, \begin{align*}
      &\int_{\lambda}^{\alpha}\int_{T_0/2}^{T_0}\frac{|\zeta_{\mathbb{K}}(\sigma+it-z_2)|^4x^{\sigma}}{|\zeta_{\mathbb{K}}(2(\sigma+it)-z_1-z_2)(\sigma+it)|}d\sigma dt \\&\ll_q \int_{T_0/2}^{T_0}\int_{\lambda}^{1+a_2}t^{\frac{8(1-\sigma+a_2)}{3}}\log^{18}t \frac{x^{\sigma}}{t}d\sigma dt+\int_{T_0/2}^{T_0}\int_{1+a_2}^{\alpha}\log^{10}t \frac{x^{\sigma}}{t}d\sigma dt\\
      &= \int_{T_0/2}^{T_0}t^{\frac{(5+8a_2)}{3}}\log^{18}t\int_{\lambda}^{1+a_2}\left(\frac{x}{t^{\frac{8}{3}}}\right)^{\sigma}d\sigma dt+\int_{T_0/2}^{T_0}\int_{1+a_2}^{\alpha}\log^{10}t \frac{x^{\sigma}}{t}d\sigma dt\\& \ll_q {x^{1+a_2}}\log^{18}T_0+{x}\log^{10}T_0.
 \end{align*}
 and \begin{align*}
   \int_{\lambda}^{\alpha}\int_{T_0/2}^{T_0}\frac{|\zeta_{\mathbb{K}}(\sigma+it-z_1-z_2)|^4x^{\sigma}}{|\zeta_{\mathbb{K}}(2(\sigma+it)-z_1-z_2)(\sigma+it)|}d\sigma dt&\ll_q   \int_{T_0/2}^{T_0}\int_{\lambda}^{1+a_1+a_2}t^{\frac{8(1-\sigma+a_1+a_2)}{3}}\log^{18}t\frac{x^{\sigma}}{t}d\sigma dt\\&+\int_{T_0/2}^{T_0}\int_{1+a_1+a_2}^{\alpha}\log^{10}t \frac{x^{\sigma}}{t}d\sigma dt\\&\ll_q {x^{1+a
   _1+a_2}}\log^{18}T_0+{x}\log^{10}T_0.
 \end{align*} Collecting all the above results and substituting in \eqref{holder}, we obtain 
 \[\int_{\lambda}^{\alpha}\int_{T_0/2}^{T_0}f(z_1,z_2,\sigma+it)\frac{x^{\sigma+it}}{\sigma+it}d\sigma dt\ll_q {x}\log^{14}T_0. \]  Next, we choose $T$ such that $T_0/2<T<T_0$, which gives  \[\int_{\lambda}^{\alpha}f(z_1,z_2,\sigma+iT)\frac{x^{\sigma+iT}}{\sigma+iT}d\sigma\ll_q \frac{x}{T}\log^{14}T.\]Integral along the vertical line $[\lambda-iT,\lambda+iT]$ is given by \begin{align*}
     \int_{\lambda-iT}^{\lambda+iT}f(z_1,z_2,s)\frac{x^{s}}{s}ds&\ll_q \int_{-T}^{T}t^{\frac{4-3a_2}{3}}\log^{18}t\frac{x^{\lambda}}{t}dt\\ &\ll_q T^{\frac{4-3a_2}{3}}x^{\lambda}\log^{18}T. \end{align*}
     Putting $T=x^{1/4}$ in the above estimates, we get the required result.
     \subsection{Cubic Number Field}
     If $\mathbb{K}$ is a cubic number field, then like the case for quadratic, we write $A(n,z_1,z_2)$ using \eqref{sigm,adirichlet} and \eqref{cseriesrepresentation1} to obtain
     \begin{align*}
      |A(n,z_1,z_2)|&\le \left |n^{a_1+a_2}\sum_{\mathcal{N}(\mathcal{I})=n}\left(\sum_{\mathcal{I}_1|\mathcal{I}}1\right)^2\right|\\
      &\le \sum_{d|n}\left[\sum_{d'|d}\left\{\sum_{d_1|d'}\sigma_0(d_1)\sum_{d_2|d/d'}\left(\sum_{d_{21}|d_2}\sigma_0(d_{21})\sum_{d_{22}|d/d_2}\sigma_0(d_{22})\right)\right\}\right]\sum_{d''|n/d}\sigma_0(d'').
  \end{align*}
  For a prime $p$, a direct computation of the right-hand side of the above inequality gives the bound
   \[|A(p,z_1,z_2)|\le 13.\]
   Choosing $T=x^c$ where $c$ is fixed real number, then using Lemma \ref{averagelemma}, we have
   \begin{align*}
       \sum_{|x-n|<x^{1-c}}|A(n,z_1,z_2)|\min\left(1,\frac{x}{T|x-n|}\right)&=\sum_{|x-n|<x^{1-c}}|A(n,z_1,z_2)|\ll \frac{x}{T}\log^{12}T,\numberthis\label{cerrordouble}\end{align*}
The above bounds also hold true for the intervals:  $x/2<T<x-x^{1-c}$ and $x+x^{1-c}<n<2x$. 
%\begin{align*}
  %\sum_{x+x^{1-c}<n<2x}|A(n,z_1,z_2)|\min\left(1,\frac{x}{T|x-n|}\right)&\ll \frac{x}{T}\log^{12}T.\numberthis\label{cerrordouble2}  
%\end{align*}
%One can get the same bound for $x/2<T<x-x^{1-c}$.
Moreover, \[\frac{x^{\alpha}}{T}\sum_{n=1}^{\infty}\frac{|A(n,z_1,z_2)|}{n^{\alpha}}\ll \frac{x}{T}\log^{12}T.
\numberthis\label{cerrordouble3}\]
These estimates yield
%Therefore, from \eqref{cerrordouble}, \eqref{cerrordouble2}, and \eqref{cerrordouble3}, we get
\[R(x;z_1,z_2)\ll\frac{x}{T}\log^{12}T.\numberthis\label{cerrordouble4} \]
In the following computations, we employ Dedekind zeta bounds \eqref{cbound} to obtain estimates of the line integrals in \eqref{holder}.
 \begin{align*}
 \int_{\lambda}^{\alpha}\int_{T_0/2}^{T_0}\frac{|\zeta_{\mathbb{K}}(\sigma+it)|^4x^{\sigma}}{|\zeta_{\mathbb{K}}(2(\sigma+it)-z_1-z_2)(\sigma+it)|}d\sigma dt& \ll_q  \int_{T_0/2}^{T_0}\int_{\lambda}^{1/2}t^{4(11-17\sigma)/6}\log^{31}t \frac{x^{\sigma}}{t}d\sigma dt\\&+ \int_{T_0/2}^{T_0}\int_{1/2}^{\alpha}t^{10(1-\sigma)/3}\log^{31}t \frac{x^{\sigma}}{t}d\sigma dt
 %\\&= \int_{T_0/2}^{T_0}t^{19/3} \log^{31}t\int_{\lambda}^{1/2}\left(\frac{x}{t^{34/3}}\right)^{\sigma}d\sigma dt\\&+\int_{T_0/2}^{T_0}t^{7/3}\log^{31}t\int_{1/2}^{\alpha}\left(\frac{x}{t^{10/3}}\right)^{\sigma}d\sigma dt
 \\
 &\ll_q (T_0^{(16-34\lambda)/3}x^{\lambda}+{x})\log^{31}T_0.
 \end{align*}
 If $a_1-a_2>0$, then we have 
 \begin{align*}
    \int_{\lambda}^{\alpha}\int_{T_0/2}^{T_0}\frac{|\zeta_{\mathbb{K}}(\sigma+it-z_1)|^4x^{\sigma}}{|\zeta_{\mathbb{K}}(2(\sigma+it)-z_1-z_2)(\sigma+it)|}d\sigma dt &\ll_q  \int_{T_0/2}^{T_0}\int_{\lambda}^{1/2+a_1}t^{4(11-17\sigma+17a_1)/6}\log^{31}t \frac{x^{\sigma}}{T}d\sigma dt\\&+ \int_{T_0/2}^{T_0}\int_{1/2+a_1}^{1+a_1}t^{\frac{10(1-\sigma+a_1)}{3}}\log^{31}t \frac{x^{\sigma}}{t}d\sigma dt+ \\&\int_{T_0/2}^{T_0}\int_{1+a_1}^{\alpha}\log^{15}t \frac{x^{\sigma}}{t}d\sigma dt\\&
    %= \int_{T_0/2}^{T_0}t^{(19+34a_1)/3} \log^{31}t\int_{\lambda}^{\frac{1}{2}+a_1}\left(\frac{x}{t^{34/3}}\right)^{\sigma}d\sigma dt\\&+\int_{T_0/2}^{T_0}t^{\frac{(7+10a_1)}{3}}\log^{31}t\int_{\frac{1}{2}+a_1}^{1+a_1}\left(\frac{x}{t^{\frac{10}{3}}}\right)^{\sigma}d\sigma dt\\&+\int_{T_0/2}^{T_0}\int_{1+a_1}^{\alpha}\log^{15}t \frac{x^{\sigma}}{t}d\sigma dt\\& 
    \ll_q (T_0^{\frac{16+34a_1-34\lambda}{3}}x^{\lambda}+{x^{1+a_1}})\log^{31}T_0+{x}\log^{15}T_0.
 \end{align*}
 Similarly, \begin{align*}
     \int_{\lambda}^{\alpha}\int_{T_0/2}^{T_0}\frac{|\zeta_{\mathbb{K}}(\sigma+it-z_2)|^4x^{\sigma}}{|\zeta_{\mathbb{K}}(2(\sigma+it)-z_1-z_2)(\sigma+it)|}d\sigma dt&\ll_q \int_{T_0/2}^{T_0}\int_{\lambda}^{1+a_2}t^{\frac{10(1-\sigma+a_2)}{3}}\log^{31}t \frac{x^{\sigma}}{t}d\sigma dt\\&+\int_{T_0/2}^{T_0}\int_{1+a_2}^{\alpha}\log^{15}t \frac{x^{\sigma}}{t}d\sigma dt\\
      &
      %=\int_{T_0/2}^{T_0}t^{\frac{(7+10a_2)}{3}}\log^{31}t\int_{\lambda}^{1+a_2}\left(\frac{x}{t^{\frac{10}{3}}}\right)^{\sigma}d\sigma dt\\&+\int_{T_0/2}^{T_0}\int_{1+a_2}^{\alpha}\log^{15}t \frac{x^{\sigma}}{t}d\sigma dt\\& 
      \ll_q {x^{1+a_2}}\log^{31}T_0+{x}\log^{15}T_0,
 \end{align*}
 and \begin{align*}
   \int_{\lambda}^{\alpha}\int_{T_0/2}^{T_0}\frac{|\zeta_{\mathbb{K}}(\sigma+it-z_1-z_2)|^4x^{\sigma}}{|\zeta_{\mathbb{K}}(2(\sigma+it)-z_1-z_2)(\sigma+it)|}d\sigma dt&\ll_q   \int_{T_0/2}^{T_0}\int_{\lambda}^{1+a_1+a_2}t^{\frac{10(1-\sigma+a_1+a_2)}{3}}\log^{31}t \frac{x^{\sigma}}{t}d\sigma dt\\&+\int_{T_0/2}^{T_0}\int_{1+a_1+a_2}^{\alpha}\log^{15}t \frac{x^{\sigma}}{t}d\sigma dt\\&\ll_q {x^{1+a
   _1+a_2}}\log^{31}T_0+{x}\log^{15}T_0.
 \end{align*}
 The above estimates show that the double integral in \eqref{holder} is bounded by \[\int_{\lambda}^{\alpha}\int_{T_0/2}^{T_0}f(z_1,z_2,\sigma+it)\frac{x^{\sigma+it}}{\sigma+it}d\sigma dt\ll_q {x}\log^{19}T_0.\]
 Next, we choose a $T$ such that $T_0/2<T<T_0$, which gives \[\int_{\lambda}^{\alpha}f(z_1,z_2,\sigma+it)\frac{x^{\sigma+iT}}{\sigma+iT}d\sigma\ll_q \frac{x}{T}\log^{19}T. \]
 Finally, the integral along the vertical line $[\lambda-iT,\lambda+iT]$ is estimated as \begin{align*}
     \int_{\lambda-iT}^{\lambda+iT}f(z_1,z_2,s)\frac{x^{s}}{s}ds&\ll_q \int_{-T}^{T}t^{\frac{5-6a_2}{3}}\log^{31}t\frac{x^{\lambda}}{t}dt\ll_q T^{\frac{5-6a_2}{3}}x^{\lambda}\log^{31}T. \end{align*}
     Putting $T=x^{3/16}$ in the above bounds, we get the required result.
\end{proof}
\section{second moment} \label{sec3}
Our arguments for asymptotics of second moments for quadratic and cubic fields follow ideas from \cite{MR2869206}, and \cite{MR3600410} with several adaptations required to extend the proof for number field. 
%, a principle difference lying in the estimation of line integrals of the modified contours because of Dedekind zeta function has larger bounds than Riemann zeta function.  
\subsection{Second Moment for Quadratic Number Field}
 \begin{proof}[Proof of Theorem \ref{theorem2}]
Let \[\beta_i=1+\frac{i}{\log y},\] where $i\in \{1,2\}.$ Using Lemma \ref{parronlemma}, we have \[\sum_{0<\mathcal{N}(\mathcal{J})\le x}C_{\mathcal{J}}({\mathcal{I}})=\frac{1}{2\pi i}\int_{\beta_i-iT}^{\beta_i+iT}\frac{\sigma_{\mathbb{K},(1-s)}(\mathcal{I})}{\zeta_{\mathbb{K}}(s)}\frac{x^s}{s}ds+\BigO{\frac{x\log y}{T}\sigma_{\mathbb{K},0}(\mathcal{I})}.\]
Squaring the two sides yields
\[\left(\sum_{0<\mathcal{N}(\mathcal{J})\le x}C_{\mathcal{J}}({\mathcal{I}})\right)^2=\frac{1}{(2\pi i)^2}\int_{\beta_1-iT}^{\beta_1+iT}\int_{\beta_2-iT}^{\beta_2+iT}\frac{\sigma_{\mathbb{K},(1-s_1)}(\mathcal{I})\sigma_{\mathbb{K},(1-s_2)}(\mathcal{I})}{\zeta_{\mathbb{K}}(s_1)\zeta_{\mathbb{K}}(s_2)}\frac{x^{s_1+s_2}}{s_1s_2}ds_1ds_2+R(x,\mathcal{I}),\numberthis\label{squaresum}\]
where \begin{align*}
    R(x,\mathcal{I})&\ll \frac{x\log y}{T}\sigma_{\mathbb{K},0}(\mathcal{I})\int_{\beta_i-iT}^{\beta_i+iT}\frac{\sigma_{\mathbb{K},(1-s)}(\mathcal{I})}{\zeta_{\mathbb{K}}(s)}\frac{x^s}{s}ds+ \frac{x^2\log^2 y}{T^2}(\sigma_{\mathbb{K},0}(\mathcal{I}))^2\\
&\ll \frac{x^2\log y\log^3T}{T}(\sigma_{\mathbb{K},0}(\mathcal{I}))^2.\numberthis\label{error7}
\end{align*}
Inserting \eqref{error7} in \eqref{squaresum}, and summing both sides over ideals $\mathcal{I}$ with $\mathcal{N}(\mathcal{I})\le y$, we have \begin{align*}
   \sum_{0<\mathcal{N}(\mathcal{I})\le y} \left(\sum_{0<\mathcal{N}(\mathcal{J})\le x}C_{\mathcal{J}}({\mathcal{I}})\right)^2&=\frac{1}{(2\pi i)^2}\int_{\beta_1-iT}^{\beta_1+iT}\int_{\beta_2-iT}^{\beta_2+iT}\frac{G(s_1,s_2,y)}{\zeta_{\mathbb{K}}(s_1)\zeta_{\mathbb{K}}(s_2)}\frac{x^{s_1+s_2}}{s_1s_2}ds_1ds_2+\\&\BigO{\frac{x^2\log y\log^3T}{T} \sum_{0<\mathcal{N}(\mathcal{I})\le y}(\sigma_{\mathbb{K},0}(\mathcal{I}))^2}\\
   &=I+\BigO{\frac{x^2\log y\log^3T}{T} \sum_{0<\mathcal{N}(\mathcal{I})\le y}(\sigma_{\mathbb{K},0}(\mathcal{I}))^2},\numberthis\label{secondmoment}
\end{align*}
where $G(s_1,s_2,y):=\sum_{0<\mathcal{N}(\mathcal{I})\le y}\sigma_{\mathbb{K},(1-s_1)}(\mathcal{I}) \sigma_{\mathbb{K},(1-s_2)}(\mathcal{I}).$
We take $a_1=a_2=0$ in \eqref{quadraticinequality}, and then use Lemma \ref{averagelemma} to get
\[\sum_{0<\mathcal{N}(\mathcal{I})\le y}(\sigma_{\mathbb{K},0}(\mathcal{I}))^2,\ll y\log^9y.\]
Using Lemma \ref{lemmasix}, the integral $I$ in \eqref{secondmoment} can be written as
\begin{align*}
    I=I_1+I_2+I_3+I_4+\BigOq{x^2y^{5/6}\log^{24}T},\numberthis\label{integral}
\end{align*}
where $I_1, I_2, I_3,$ and $I_4$ are the integrals corresponding to the four terms appearing in $R_0$ in Lemma \ref{lemmasix}. We compute each of them separately below. 
%where \begin{align*}
   % I_1&=\frac{\rho y}{(2\pi i)^2}\int_{\beta_1-iT}^{\beta_1+iT}\int_{\beta_2-iT}^{\beta_2+iT} \frac{\zeta_{\mathbb{K}}(s_1+s_2-1)}{\zeta_{\mathbb{K}}(s_1+s_2)}\frac{x^{s_1+s_2}}{s_1s_2}ds_1ds_2.
%\end{align*}
 \subsubsection{Evaluation of $I_1$}
 In order to evaluate the integral $I_1$
 \begin{align*}
    I_1&=\frac{\rho y}{(2\pi i)^2}\int_{\beta_1-iT}^{\beta_1+iT}\int_{\beta_2-iT}^{\beta_2+iT} \frac{\zeta_{\mathbb{K}}(s_1+s_2-1)}{\zeta_{\mathbb{K}}(s_1+s_2)}\frac{x^{s_1+s_2}}{s_1s_2}ds_1ds_2,
\end{align*}
 we shift the line integral in $s_2$-plane to a rectangular contour containing the lines $[\beta_2+iT,1/2+iT]$, $[1/2+iT,1/2-iT]$, $[1/2-iT,\beta_2-iT]$, and $[\beta_2+iT,\beta_2-iT]$. We see that $s_2=2-s_1$ is the pole of the integrand inside the contour and the residue at pole is \[\dfrac{\rho x^2}{\zeta_{\mathbb{K}}(2)s_1(2-s_1)}.\] If $L_{1,1}$, $L_{1,2}$, and $L_{1,3}$ are the integrals along the lines   $[3/2-\beta_1+iT,3/2-\beta_1-iT]$, $[\beta_2+iT,\beta_2-iT]$, and $[3/2-\beta_1-iT,\beta_2-iT]$ respectively, then 
 \begin{align*}
     |L_{1,1}|,|L_{1,3}| &\ll_q y\int_{-T}^{T}\left(\int_{1/2}^{\beta_2}T^{2(2-\beta_1-\sigma)/3}\log^8T \frac{x^{\sigma}}{T}d\sigma\right)\frac{1}{1+|t|}dt\\
     &\ll_q \frac{xy\log^8T}{T^{1/3}}\int_{-T}^{T}\left(\int_{1/2}^{\beta_2}\left (\frac{x}{T^{2/3}}\right)^{\sigma}d\sigma\right)\frac{1}{1+|t|}dt\\
     & \ll_q \frac{x^2y\log^9T}{T}+ \frac{x^{3/2}y\log^9T}{T^{2/3}}.\numberthis\label{l1integral}
 \end{align*}
 Furthermore, the integral along the vertical line is given by \begin{align*}
     |L_{1,2}|&\ll_q yx^{3/2}\int_{-T}^{T}\int_{-T}^{T}\frac{|\zeta_{\mathbb{K}}(\beta_1-1/2+i(t_1+t_2))|}{|\zeta_{\mathbb{K}}(\beta_1+1/2+i(t_1+t_2))|}\frac{1}{(1+|t_1|)(1+|t_2|)}dt_1dt_2\\
     &\ll_q yx^{3/2}\int_{-2T}^{2T}\frac{|\zeta_{\mathbb{K}}(\beta_1-1/2+it)|}{|\zeta_{\mathbb{K}}(\beta_1+1/2+it)|} \int_{-T}^{T}\frac{1}{(1+|t_1|)(1+|t-t_1|)}dt_1dt\\
     & \ll_q yx^{3/2}\log T \int_{-2T}^{2T}\frac{t^{1/3}\log^6t}{(1+|t|)}dt\\ &\ll_q yx^{3/2}T^{1/3}\log^7 T . \numberthis\label{l2integral}
 \end{align*}
 Therefore, we have
 \begin{align*}
     I_1&= \frac{\rho^2yx^2}{\zeta_{\mathbb{K}}(2)}\frac{1}{2\pi i}\int_{\beta_1-iT}^{\beta_1+iT}\frac{1}{s
     _1(2-s_1)}ds_1+\BigOq{\frac{x^2y\log^9T}{T}+ yx^{3/2}T^{1/3}\log^7 T }\\
     &= \frac{\rho^2yx^2}{2\zeta_{\mathbb{K}}(2)}+\BigOq{\frac{x^2y\log^9T}{T}+ yx^{3/2}T^{1/3}\log^7T}.\numberthis\label{I1integral}
     \end{align*}
     \subsubsection{Evaluation of $I_2$}
     The integral $I_2$ is given by \begin{align*}
         I_2 & =\frac{\rho y^2}{ (2\pi i)^2}\int_{\beta_1-iT}^{\beta_1+iT}\int_{\beta_2-iT}^{\beta_2+iT} \frac{\zeta_{\mathbb{K}}(2-s_1)\zeta_{\mathbb{K}}(1-s_1+s_2)}{(2-s_1)\zeta_{\mathbb{K}}(2-s_1+s_2)\zeta_{\mathbb{K}}(s_1)}\frac{x^{s_1+s_2}}{y^{s_1}s_1s_2}ds_1ds_2.
     \end{align*}
     We move the line integral over $s_1$-plane to a contour with vertices $\beta_1+iT$, $2-\frac{3}{\log y}+iT$, $2-\frac{3}{\log y}-iT$, and $\beta_1+iT$. The integrand has a simple at $s_1=s_2$ with residue \[\rho\frac{\zeta_{\mathbb{K}}(2-s_2)x^{2s_2}/y^{s_2}}{s_2^2(2-s_2)\zeta_{\mathbb{K}}(2)\zeta_{\mathbb{K}}(s_2)}\] 
     Let $L_{2,1}$, $L_{2,3}$ be the integrals along the horizontal lines of the contour, and $L_{2,2}$ is the integral along the vertical line. Thus, we have 
     \begin{align*}
         |L_{2,1}|,|L_{2,3}|&\ll y^2 \int_{-T}^{T}\left(\int_{\beta_1}^{2-\frac{3}{\log y}}\frac{\zeta_{\mathbb{K}}(2-\sigma)\zeta_{\mathbb{K}}(1-\sigma+\beta_2)}{\zeta_{\mathbb{K}}(2-\sigma+\beta_2)\zeta_{\mathbb{K}}(\sigma)}\frac{x^{\sigma+\beta_2}}{y^{\sigma}T^2}d\sigma\right)\frac{1}{1+|t|}dt\\
         &\ll_q \frac{xy^2}{T^2}\int_{-T}^{T}\left(\int_{\beta_1}^{3/2}T^{\frac{2-2(2-\sigma)}{3}}T^{\frac{2-2(1-\sigma+\beta_2)}{3}}\log^{10}T\frac{x^{\sigma}}{y^{\sigma}}d\sigma\right)\frac{1}{1+|t|}dt\\
         &\ll_q \frac{xy^2\log^{10}T}{T^{2+4/3}}\int_{-T}^{T}\left(\int_{\beta_1}^{3/2}\left(\frac{xT^{4/3}}{y}\right)^{\sigma}d\sigma\right)\frac{1}{1+|t|}dt\\
         &\ll_q \frac{x^{5/2}y^{1/2}\log^{11}T}{T^{4/3}}+\frac{x^2y\log^{11}T}{T^2},\numberthis\label{l21}
     \end{align*}
     and
      \begin{align*}
         |L_{2,2}|&\ll x^{5/2}y^{1/2}\int_{-T}^{T}\int_{-T}^{T}\frac{|\zeta_{\mathbb{K}}(1/2-it_1)||\zeta_{\mathbb{K}}(-1/2+\beta_2+i(-t_1+t_2))|}{|\zeta_{\mathbb{K}}(3/2+it_1)||\zeta_{\mathbb{K}}(1/2+\beta_2+i(-t_1+t_2))|(1+|t_1|)^2(1+|t_2|)}dt_1dt_2\\
         &\ll_q x^{5/2}y^{1/2}\int_{-2T}^{2T}\frac{|\zeta_{\mathbb{K}}(-1/2+\beta_2+it)|}{|\zeta_{\mathbb{K}}(1/2+\beta_2+it)|}\int_{-T}^{T}\frac{t_1^{1/3}\log ^6t_1}{(1+|t_1|)^2(1+|t_1+t|)}dt_1dt\\
         &\ll_q \frac{x^{5/2}y^{1/2}\log ^6T}{T^{2/3}}\int_{-2T}^{2T}\frac{t^{1/3}\log ^6t}{(1+|t|)}dt\\
         &\ll_q\frac{x^{5/2}y^{1/2}\log ^{12}T}{T^{1/3}}.\numberthis\label{l22}
     \end{align*}
    Combining the above estimates, we have \begin{align*}
         I_2&=\frac{\rho^2 y^2}{\zeta_{\mathbb{K}}(2) (2\pi i)}\int_{\beta_2-iT}^{\beta_2+iT}\frac{\zeta_{\mathbb{K}}(2-s_2)x^{2s_2}/y^{s_2}}{s_2^2(2-s_2)\zeta_{\mathbb{K}}(s_2)}ds_2+\BigOq{ \frac{x^2y\log^{11}T}{T^2}+\frac{x^{5/2}y^{1/2}\log ^{12}T}{T^{1/3}}}\\
         &= -\frac{\rho^2x^4\zeta_{\mathbb{K}}(0)}{4\zeta_{\mathbb{K}}(2)^2 }+\BigOq{ \frac{x^2y\log^{11}T}{T^2}+\frac{x^{5/2}y^{1/2}\log ^{12}T}{T^{1/3}}}.\numberthis\label{I2integral}
     \end{align*}
     \subsubsection{Evaluation of $I_3$}
     The integral $I_3$ is given as \begin{align*}
       I_3 & =\frac{\rho y^2}{ (2\pi i)^2}\int_{\beta_1-iT}^{\beta_1+iT}\int_{\beta_2-iT}^{\beta_2+iT} \frac{\zeta_{\mathbb{K}}(2-s_2)\zeta_{\mathbb{K}}(1-s_2+s_1)}{(2-s_2)\zeta_{\mathbb{K}}(2-s_2+s_1)\zeta_{\mathbb{K}}(s_2)}\frac{x^{s_1+s_2}}{y^{s_2}s_1s_2}ds_1ds_2.  
     \end{align*}
    To estimate the integral $I_3$, we modify the line integration over $s_2$ to the contour containing the vertices $\beta_2+iT$, $\beta_2-iT$, $3/2+iT$, and $3/2-iT$, and denote the integration along the line $[\beta_2+iT,3/2+iT]$, $[3/2+iT,3/2-iT]$, and $[3/2-iT,\beta_2-iT]$ are $L_{3,1}$, $L_{3,2}$, and $L_{3,3}$ respectively. There is no pole of integrand
    inside the contour. So, the integral along the horizontal lines are 
    \begin{align*}
        |L_{3,1}|,|L_{3,3}|& \ll y^2 \int_{-T}^{T}\left(\int_{\beta_2}^{3/2}\frac{\zeta_{\mathbb{K}}(2-\sigma)\zeta_{\mathbb{K}}(1-\sigma+\beta_1)}{\zeta_{\mathbb{K}}(2-\sigma+\beta_1)\zeta_{\mathbb{K}}(\sigma)}\frac{x^{\sigma+\beta_1}}{y^{\sigma}T^2}d\sigma\right)\frac{1}{1+|t|}dt\\
          &\ll_q \frac{xy^2}{T^2}\int_{-T}^{T}\left(\int_{\beta_2}^{3/2}T^{\frac{2-2(2-\sigma)}{3}}T^{\frac{2-2(1-\sigma+\beta_1)}{3}}\log^{10}T\frac{x^{\sigma}}{y^{\sigma}}d\sigma\right)\frac{1}{1+|t|}dt\\
         &\ll_q \frac{xy^2\log^{10}T}{T^{2+4/3}}\int_{-T}^{T}\left(\int_{\beta_2}^{3/2}\left(\frac{xT^{4/3}}{y}\right)^{\sigma}d\sigma\right)\frac{1}{1+|t|}dt\\
         &\ll_q \frac{x^{5/2}y^{1/2}\log^{11}T}{T^{4/3}}+\frac{x^2y\log^{11}T}{T^2}.\numberthis\label{l31}
    \end{align*}
    One can evaluate the integral along vertical line same as \eqref{l22}. Therefore 
    \[|L_{3,2}|\ll_q\frac{x^{5/2}y^{1/2}\log ^{11}T}{T^{1/3}},\numberthis\label{l32}\]
    and \[I_3=\BigOq{\frac{x^2y\log^{11}T}{T^2}+\frac{x^{5/2}y^{1/2}\log ^{12}T}{T^{1/3}}}.\numberthis\label{I3integral}\]
    \subsubsection{Evaluation of $I_4$}
    Finally, the integration $I_4$ is equal to \[I_4=\frac{\rho }{ (2\pi i)^2}\int_{\beta_1-iT}^{\beta_1+iT}\int_{\beta_2-iT}^{\beta_2+iT} \frac{\zeta_{\mathbb{K}}(2-s_1)\zeta_{\mathbb{K}}(2-s_2)\zeta_{\mathbb{K}}(3-s_1-s_2)}{(3-s_1-s_2)\zeta_{\mathbb{K}}(4-s_1-s_2)\zeta_{\mathbb{K}}(s_1)\zeta_{\mathbb{K}}(s_2)}\frac{y^{3-s_1-s_2}x^{s_1+s_2}}{s_1s_2}ds_1ds_2.\]
    We estimate $I_4$ by shifting the integration over $s_2$ to the contour with vertices $\beta_2+iT$, $\beta_2-iT$, $5/2-\beta_1+iT$, and $5/2-\beta_1-iT$. Suppose the integration along the line $[\beta_2+iT,5/2-\beta_1+iT]$, $[5/2-\beta_1+iT,5/2-\beta_1-iT]$, and $[5/2-\beta_1-iT,\beta_2-iT]$ are $L_{4,1}$, $L_{4,2}$, and $L_{4,3}$ respectively. Then 
    \begin{align*}
        |L_{4,1}|,|L_{4,3}|&\ll  y^2x \int_{-T}^{T}\left(\int_{\beta_2}^{5/2-\beta_1}\frac{\zeta_{\mathbb{K}}(2-\beta_1)\zeta_{\mathbb{K}}(2-\sigma)\zeta_{\mathbb{K}}(3-\beta_1-\sigma)}{\zeta_{\mathbb{K}}(4-\sigma-\beta_1)\zeta_{\mathbb{K}}(\beta_1)\zeta_{\mathbb{K}}(\sigma)}\frac{x^{\sigma}}{y^{\sigma}T^2}d\sigma\right)\frac{1}{1+|t|}dt\\
        &\ll_q \frac{xy^2}{T^2}\int_{-T}^{T}\left(\int_{\beta_1}^{5/2-\beta_1}T^{\frac{2-2(2-\sigma)}{3}}T^{\frac{2-2(1-\sigma-\beta_1)}{3}}\log^{14}T\frac{x^{\sigma}}{y^{\sigma}}d\sigma\right)\frac{1}{1+|t|}dt\\
        &\ll_q \frac{x^{5/2}y^{1/2}\log^{15}T}{T^{4/3}}+\frac{x^2y\log^{15}T}{T^2}.\numberthis\label{l41}
    \end{align*}
    And \begin{align*}
        &|L_{4,2}|\\& \ll x^{5/2}y^{1/2}\int_{-T}^{T}\int_{-T}^{T}\frac{|\zeta_{\mathbb{K}}(2-\beta_1-it_1)||\zeta_{\mathbb{K}}(-1/2+\beta_1-it_1)||\zeta_{\mathbb{K}}(1/2+i(-t_1-t_2))|dt_1dt_2}{|\zeta_{\mathbb{K}}(3/2+i(-t_1-t_2))||\zeta_{\mathbb{K}}(\beta_1+it_1)||\zeta_{\mathbb{K}}(5/2-\beta_1+it_2)|(1+|t_1|)^2(1+|t_2|)}\\
         &\ll_q x^{5/2}y^{1/2}\int_{-2T}^{2T}\frac{|\zeta_{\mathbb{K}}(1/2+it)|}{|\zeta_{\mathbb{K}}(3/2+\beta_2+it)|}\int_{-T}^{T}\frac{t_1^{1/3}\log ^{12}t_1}{(1+|t_1|)^2(1+|t-t_1|)}dt_1dt\\
         &\ll_q \frac{x^{5/2}y^{1/2}\log ^{12}T}{T^{2/3}}\int_{-2T}^{2T}\frac{t^{1/3}\log ^6t}{(1+|t|)}dt\\
         &\ll_q\frac{x^{5/2}y^{1/2}\log ^{18}T}{T^{1/3}}.\numberthis\label{l42} 
    \end{align*}
    Thus, \[I_4=\BigOq{\frac{x^2y\log^{15}T}{T^2}+\frac{x^{5/2}y^{1/2}\log ^{18}T}{T^{1/3}}}.\numberthis\label{I4integral}\]
Collecting the results from \eqref{I1integral}, \eqref{I2integral}, \eqref{I3integral}, \eqref{I4integral} and inserting in \eqref{integral}, we deduce
\begin{align*}
    I&=\frac{\rho^2yx^2}{\zeta_{\mathbb{K}}(2)}-\frac{\rho^2x^4\zeta_{\mathbb{K}}(0)}{4\zeta_{\mathbb{K}}(2)^2 }
   +\BigOq{\frac{x^2y\log^9T}{T}+ yx^{3/2}T^{1/3}\log^7T}\\&+\BigOq{\frac{x^{5/2}y^{1/2}\log ^{18}T}{T^{1/3}}+x^2y^{5/6}\log^{24}T}.\end{align*}
   Take $T=x^{3/2-\epsilon}$, then for $x\le y <x^2$, 
   \begin{align*}
        \sum_{0<\mathcal{N}(\mathcal{I})\le y} \left(\sum_{0<\mathcal{N}(\mathcal{J})\le x}C_{\mathcal{J}}({\mathcal{I}})\right)^2&= \frac{\rho^2yx^2}{2\zeta_{\mathbb{K}}(2)}-\frac{\rho^2x^4\zeta_{\mathbb{K}}(0)}{4\zeta_{\mathbb{K}}(2)^2 }+\BigOq{yx^{2-\epsilon}\log^7x+x^2y^{5/6}\log^{24}x},
   \end{align*}
   and for $x^2\le y <x^3$,
    \begin{align*}
        \sum_{0<\mathcal{N}(\mathcal{I})\le y} \left(\sum_{0<\mathcal{N}(\mathcal{J})\le x}C_{\mathcal{J}}({\mathcal{I}})\right)^2&= \frac{\rho^2yx^2}{2\zeta_{\mathbb{K}}(2)}+\BigOq{yx^{2-\epsilon}\log^7x +x^2y^{5/6}\log^{24}x}.
   \end{align*}
   \end{proof}
   \subsection{Second Moment for Cubic Number Field}
\begin{proof}[Proof of Theorem \ref{theorem4}]
   The proof uses the same steps as in the degree two case, except several technical changes which arise due to the difference between the bounds of the Dedekind zeta function for cubic \eqref{cbound}, and quadratic number fields \eqref{bound}
   %, which in turn, changes the estimates of the tails of the various line integrals in Perron's formulas. 
   In here, the optimal choice of $T$ appearing upon truncation of the infinite line is $T=x^{5/6-\epsilon}$ for a fixed $\epsilon>0$. Since the arguments do not change, we will omit a detailed illustration. 
   \end{proof}
 \section{Proof of Theorem \ref{theorem5}} \label{sec4}
 %In the previous section, we estimated the second moment for quadratic and cubic number fields using an analytic approach. %Now, a natural question arises that can we estimate the second moment for a number field of higher degree.
 In this section, using elementary techniques, we prove the second moment for number fields of any degree satisfying condition \eqref{property1}. As the degree of a number field increases, due to large bounds for the associated Dedekind zeta function in the required regions, the error terms originating from the line integrals in Perron's formula dominate over the main terms. Consequently, we avoid an analytic approach for higher degree number 
 fields at the cost of losing a second main term. 
 \begin{proof}[Proof of Theorem \ref{theorem5}]
 From \eqref{one}, we have 
 \begin{align*}
      \sum_{0<\mathcal{N}(\mathcal{I})\le y} \left(\sum_{0<\mathcal{N}(\mathcal{J})\le x}C_{\mathcal{J}}({\mathcal{I}})\right)^2&=  \sum_{0<\mathcal{N}(\mathcal{I})\le y} \left(\sum_{0<\mathcal{N}(\mathcal{J})\le x}\sum_{\substack{\mathcal{I}_1|\mathcal{J}\\\mathcal{I}_1|\mathcal{I}}}\mathcal{N}(\mathcal{I}_1)\mu(\frac{\mathcal{J}}{\mathcal{I}_1})\right)^2\\
      &= \sum_{0<\mathcal{N}\mathcal{(I}\mathcal{J})\le x}\sum_{0<\mathcal{N}\mathcal{(I'}\mathcal{J'})\le x}\mathcal{N}(\mathcal{I})\mathcal{N}(\mathcal{I}')\mu(\mathcal{J})\mu(\mathcal{J}')\sum_{\substack{0<\mathcal{N}\mathcal{I})\le y\\\mathcal{I}|\mathcal{I}_1\\\mathcal{I}'|\mathcal{I}_1}}1 \numberthis\label{gennumb}
      \end{align*}
      From the hypothesis in Theorem \ref{theorem5}, the innermost sum is given by
      \[\#\{I: I\subseteq O_{\mathbb{K}}: \mathcal{N}(\mathcal{I})\le y, \  \mathcal{I}|\mathcal{I}_1, \mathcal{I}'|\mathcal{I}_1 \}=\frac{\rho_{\mathbb{K}} y}{\mathcal{N}(\mathcal{I}\cap\mathcal{I}')}+\BigO{\left(\frac{y}{\mathcal{N}(\mathcal{I}\cap\mathcal{I}')}\right)^{\alpha}}. \]
      Using this, the left hand side of \eqref{gennumb} equals
      \begin{align*}
      \\
      &\rho_{\mathbb{K}} y\sum_{0<\mathcal{N}\mathcal{(I}\mathcal{J})\le x}\sum_{0<\mathcal{N}(\mathcal{I'}\mathcal{J'})\le x}\mathcal{N}(\mathcal{I}+\mathcal{I}')\mu(\mathcal{J})\mu(\mathcal{J}')\\&+\BigO{y^{\alpha}\sum_{0<\mathcal{N}\mathcal{(I}\mathcal{J})\le x}\sum_{0<\mathcal{N}\mathcal{(I'}\mathcal{J'})\le x}\mathcal{N}(\mathcal{I}+\mathcal{I}')^{\alpha}\mathcal{N}\mathcal{(I})^{1-\alpha}\mathcal{N}\mathcal{(I'})^{1-\alpha}}\\
      &=:I_1+I_2\numberthis\label{prufer1}
 \end{align*}
 
     Let $\gcd(\mathcal{I},\mathcal{I}')=\mathcal{A}$, then $\mathcal{I}=\mathcal{A}\mathcal{E}$, and $\mathcal{I}'=\mathcal{A}\mathcal{E}'$ such that $\mathcal{E}+\mathcal{E}'=\mathcal{O}_{\mathbb{K}}$. This yields  
     \begin{align*}
         I_1&=\rho_{\mathbb{K}} y\sum_{0<\mathcal{N}(\mathcal{A}\mathcal{I}\mathcal{J})\le x}\sum_{\substack{0<\mathcal{N}(\mathcal{E'}\mathcal{A}\mathcal{J'})\le x\\\mathcal{E}+\mathcal{E}'=\mathcal{O}_{\mathbb{K}}}}\mathcal{N}\mathcal{(A})\mu(\mathcal{J})\mu(\mathcal{J}')\\
         &=\rho_{\mathbb{K}} y\sum_{0<\mathcal{N}(\mathcal{A}\mathcal{I}\mathcal{J})\le x}\sum_{0<\mathcal{N}(\mathcal{E'}\mathcal{A}\mathcal{J'})\le x}\mathcal{N}\mathcal{(A})\mu(\mathcal{J})\mu(\mathcal{J}')\sum_{\mathcal{M}|\mathcal{E}+\mathcal{E}'}\mu(\mathcal{M})\\
         &= \rho_{\mathbb{K}} y\sum_{0<\mathcal{N}(\mathcal{A}\mathcal{M})\le x}\mathcal{N}\mathcal{(A}\mathcal{M})\left(\sum_{0<\mathcal{N}(\mathcal{E}\mathcal{J})\le x/\mathcal{N}\mathcal{(A}\mathcal{M})}\mu(\mathcal{J})\right)^2\\
         &= \rho_{\mathbb{K}} y\sum_{0<\mathcal{N}(\mathcal{A}\mathcal{M})\le x}\mathcal{N}\mathcal{(A})\mu(\mathcal{M})\\
         &= \frac{\rho_{\mathbb{K}}^2 x^2y}{2\zeta_{\mathbb{K}}(2)}+\BigO{xy\log x},\numberthis\label{prufer2}
     \end{align*}
and \begin{align*}
    I_2&\ll y^{\alpha}\sum_{0<\mathcal{N}(\mathcal{A}\mathcal{I}\mathcal{J})\le x}\sum_{\substack{0<\mathcal{N}(\mathcal{E'}\mathcal{A}\mathcal{J'})\le x\\\mathcal{E}+\mathcal{E}'=\mathcal{O}_{\mathbb{K}}}}\mathcal{N}\mathcal{(A})^{\alpha}\mathcal{N}(\mathcal{E}\mathcal{A})^{1-\alpha}\mathcal{N}(\mathcal{E'}\mathcal{A})^{1-\alpha}\\
    &\ll y^{\alpha} \sum_{0<\mathcal{N}(\mathcal{A}\mathcal{I}\mathcal{J})\le x}\sum_{0<\mathcal{N}(\mathcal{E'}\mathcal{A}\mathcal{J'})\le x}\mathcal{N}\mathcal{(A})^{2-\alpha}\mathcal{N}(\mathcal{E})^{1-\alpha}\mathcal{N}(\mathcal{E'})^{1-\alpha}\sum_{\mathcal{M}|\mathcal{E}+\mathcal{E}'}\mu(\mathcal{M})\\
    &\ll y^{\alpha} \sum_{0<\mathcal{N}(\mathcal{A}\mathcal{M})\le x}\mathcal{N}\mathcal{(A})^{2-\alpha}\left(\sum_{0<\mathcal{N}(\mathcal{E}\mathcal{J})\le x/\mathcal{N}\mathcal{(A}\mathcal{M})}\mathcal{N}(\mathcal{E})^{1-\alpha}\right)^2\\
    &\ll y^{\alpha} \sum_{0<\mathcal{N}(\mathcal{A}\mathcal{M})\le x}\mathcal{N}\mathcal{(A})^{2-\alpha}\frac{x^{(2-\alpha)^2}}{\mathcal{N}(\mathcal{A}\mathcal{M})^{(2-\alpha)^2}}
    \ll y^{\alpha}x^{3-\alpha}.\numberthis\label{prufer3}
\end{align*}
On substitution of \eqref{prufer2} and \eqref{prufer3} in \eqref{prufer1}, we obtain the required result.
\end{proof}
\bibliographystyle{plain} 
\bibliography{reference}
\end{document}